\def\today{21.9.2012} 
\theoremstyle{plain} \newtheorem{theorem}{Theorem}[section]
\newtheorem{lemma}[theorem]{Lemma}
\newtheorem{proposition}[theorem]{Proposition}
\newtheorem{corollary}[theorem]{Corollary} \theoremstyle{definition}
 \theoremstyle{remark}
\newtheorem{remark}[theorem]{Remark}
\newcommand{\R}{{\mathbb R}}
\def\im{{\rm i}}    
\numberwithin{equation}{section}
\begin{document}

\title{Decay and scattering of small solutions of pure power
NLS in  ${\mathbf R}$ with $p>3$  and with a potential  }

\author {Scipio Cuccagna, Vladimir Georgiev and Nicola Visciglia}

\date{\today}
\maketitle

\begin{abstract} We prove decay and scattering of solutions of the Nonlinear Schr\"oding-er equation
(NLS) in ${\mathbf R}$
with pure power nonlinearity with exponent $3<p<5$ when the initial datum is small
in $\Sigma$ (bounded energy and variance), in the presence of a linear
inhomogeneity  represented by a linear potential which is a real valued Schwarz
function. We assume absence of  discrete modes. The proof is analogous to the one for the translation invariant equation.
In particular we find    appropriate operators commuting with the linearization.

\end{abstract}

\section{Introduction}

We consider
\begin{equation}\label{eq:PotNLS}
 ({\bf i} \partial _t  + \triangle _V ) u+ \lambda
 |u| ^{p-1} u=0  \text{ for $t\ge 1$,  $x\in {\mathbf R}  $  and $u(1) = u_0$}
\end{equation}
with $\triangle _V :=\triangle     -V(x)$ and $ \triangle :=\partial^2_x $ and $\lambda \in {\mathbf R}\backslash \{ 0\}$. In this paper we focus on   exponents
$3<p<5$.   $V$ is a real valued Schwartz function and $\triangle _V $
is taken without eigenvalues.

It is well known that for $2\le p < 5$ the initial value problem  in
\eqref{eq:PotNLS} is globally well posed in $H^1({\mathbf R} )$.
Our goal is to study the   asymptotic behavior of  solutions with  initial data
$
    u(1) = u_0 $  of size $\epsilon$  in a   suitable Sobolev norm, with
    $\epsilon$
    sufficiently small.
 It is natural to  ask whether such solutions are   asymptotically free and
 satisfy
 \begin{equation}\label{eq:dispersion}
  \| u(t) \| _{L^\infty ({\mathbf R})} \le   {C_0}{t^{-\frac{1}{2}  }}  \epsilon  ,
\end{equation}
 that is have the    decay rate of the solution to the linear Schr\"odinger equation.

 We recall  some of the results for $V=0$.  For spatial dimension
 $d$, McKean and Shatah  \cite{MS} answered positively to our question
 for $1+\frac{2}{d}<p<1+\frac{4}{d}.$   The case $p\ge 1+\frac{4}{d} $
 and $p<1+\frac{4}{d-1}$ for $d\ge 3$
  was  answered positively by  W. Strauss \cite{S81}.
  W. Strauss \cite{S74}   proved    that the  zero solution is the
  only
asymptotically free solution   when $1 < p \leq  1+\frac{2}{d}$ for
$d \geq 2,$ and when $1 < p \leq 2$ for $d = 1.$  This result was extended to the case $1 < p \leq 3$ and $d=1 $ by J. Barab \cite{B}, using an idea
of R. Glassey \cite{G73}. The exponent
  $p=1+\frac{2}{d}$ is    critical and particularly interesting.  The existence and the form of the
  scattering operator was obtained by Ozawa   \cite{O91} for $d=1$
 and by Ginibre and Ozawa  \cite{GO} for $d\ge 2$. The completeness of the
 scattering operator and the  decay estimate were obtained by Hayashi and Naumkin
 \cite{HN}.    Completeness of the
 scattering operator and    decay estimate  for all solutions, 
not only  for small ones,
 for $d=1$ and  $\lambda <0$, were obtained by Deift and Zhou
 \cite{DZ}. See also  \cite{DZ3,DZ2}  for  earlier references and
\cite{DM} for a simpler proof.  The result was extended to perturbations of   the defocusing cubic NLS    for $d=1$   in   \cite{DZ1}.
For    the focusing cubic NLS  for $d=1$, the pure radiation case, along with other cases reducible to the pure radiation one by means of Darboux transformations,     was treated in    \cite{DP}, proceeding along the lines of \cite{DZ}.

 Our goal in the present paper is to extend the result  of  McKean and Shatah  \cite{MS} to the case $V\neq 0$
 and $d=1$, which to our knowledge is open. For $V$ we assume the following hypothesis,  where we refer to Sect. \ref{sec:spectral} for the definition of the transmission coefficient $T(\tau )$.

\begin{itemize}
\item[(H)]  The potential    $V$ is a real valued Schwartz function such that for the spectrum  we have $\sigma (\triangle _V )=(-\infty ,0]$. Furthermore, $V$ is  generic, that is  the transmission coefficient $T(\tau )$ satisfies $T(0)=0$.

\end{itemize}
We denote by $ \Sigma _{s}$   the Hilbert space defined as the closure of $C_0^\infty({\mathbf R})$ functions with respect
to the norm
\begin{equation*}\label{eq:Sigma}
  \| u \| _{\Sigma _s}^2 :=  \| u \| _{H^s ({\mathbf R}  )}^2+  \| \, |x|^{s} u \| _{L^2({\mathbf R}  )}^2  .
\end{equation*}
Our main result is the following
\begin{theorem}\label{thm:dispersion} Assume that $V$  satisfies (H),
$s>1/2$ and $p>3 $.
Then there exist constants
$\epsilon _0>0$ and $C _0>0$ such that for    $\epsilon \in (0, \epsilon _0) $ and $ \| u (1)\| _{\Sigma _s} \leq \epsilon  $
the solution to \eqref{eq:PotNLS} satisfies  the decay inequality \eqref{eq:dispersion} for $t\ge 1$.
Furthermore  there exists $u_+\in L^2( {\mathbf R} )$ such that
\begin{equation}\label{eq:scattering} \lim _{t\to +\infty}
  \| u(t)   - e^{{\bf i}t\triangle } u_+\| _{L^2( {\mathbf R} )} =   0 .
\end{equation}
  \end{theorem}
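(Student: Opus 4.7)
\medskip

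\noindent\textbf{Proof plan (sketch).}
The strategy is to mirror the classical small--data scattering argument of McKean--Shatah for the free line, with the translation invariant vector field $J=x+2\im t\partial_x$ replaced by a distorted analogue $J_V$ adapted to $\triangle_V$. The two key analytic tools are the distorted Fourier transform $\mathcal{F}_V$ associated with the Jost basis (well behaved at $\tau=0$ precisely because of the genericity assumption $T(0)=0$), and a weighted Sobolev norm $\|u\|_{\Sigma_s}+\|J_V u\|_{L^2}$ that governs the pointwise decay rate $t^{-1/2}$ via a distorted Gagliardo--Nirenberg inequality
$\|u(t)\|_{L^\infty}\lesssim t^{-1/2}\bigl(\|u(t)\|_{L^2}+\|J_V(t)u(t)\|_{L^2}\bigr).$
Once this is in place, the integrability of $t^{-(p-1)/2}$ for $p>3$ is exactly what makes Strauss' short range condition hold and yields both the global control of the norms and the scattering statement.

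The first step is to construct $J_V(t)$. I would set, in the distorted frequency picture, $J_V(t):=\mathcal{F}_V^{-1}(\im\partial_\tau+2t\tau)\mathcal{F}_V$, so that $[\im\partial_t+\triangle_V,J_V]=0$ by construction and, moreover, $\mathcal{F}_V$ conjugates the solution operator $e^{\im t\triangle_V}$ to a multiplier, reproducing the usual stationary phase structure. The genericity hypothesis $T(0)=0$ ensures that $\mathcal{F}_V$, and derivatives in the spectral variable $\tau$, behave across $\tau=0$ like their free counterparts up to Schwartz corrections; this is where Section~\ref{sec:spectral} is used. I would compare $J_V$ with the free $J$, writing $J_V=J+R$ where $R$ is a smoothing/localising remainder coming from the reflection and transmission coefficients and from the localized part of the Jost functions, and prove mapping bounds of $R$ on $L^2$ and on $\Sigma_s$.

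The second step is the energy estimate. Commuting $J_V$ with the equation and pairing with $J_V u$ gives
\[
\tfrac{d}{dt}\tfrac12\|J_V u\|_{L^2}^2=\lambda\,\mathrm{Im}\langle J_V(|u|^{p-1}u),J_V u\rangle.
\]
Here one needs a \emph{distorted Leibniz rule}: in the free case $J(|u|^{p-1}u)=\tfrac{p+1}{2}|u|^{p-1}Ju-\tfrac{p-1}{2}|u|^{p-3}u^2\overline{Ju}$, which is gauge covariant because $J$ is a first order vector field with real symbol. For $J_V$ one gets the same identity modulo a commutator $[R,|u|^{p-1}\cdot]$ that is controlled by $\|u\|_{L^\infty}^{p-1}\|u\|_{L^2}$ using the smoothing nature of $R$. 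Combined with the distorted Gagliardo--Nirenberg inequality, this yields
$\tfrac{d}{dt}\|J_V u\|_{L^2}^2\lesssim t^{-(p-1)/2}(\|u\|_{L^2}+\|J_V u\|_{L^2})^{p+1}$,
and since $(p-1)/2>1$ the right hand side is integrable, closing a bootstrap that gives uniform bounds on $\|u(t)\|_{L^2}+\|J_V(t)u(t)\|_{L^2}$ for data of size $\epsilon$. The decay (\ref{eq:dispersion}) follows from the Gagliardo--Nirenberg inequality.

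Finally, for scattering I would apply Duhamel to the equation, obtaining
\[
e^{-\im t\triangle_V}u(t)=e^{-\im\triangle_V}u_0+\im\lambda\int_1^t e^{-\im s\triangle_V}(|u|^{p-1}u)(s)\,ds,
\]
and use the just proved decay to show that the integrand is bounded in $L^2$ by $\|u\|_{L^\infty}^{p-1}\|u\|_{L^2}\lesssim \epsilon^p s^{-(p-1)/2}$, hence integrable. This yields an asymptotic state $\widetilde u_+$ for the evolution $e^{\im t\triangle_V}$, and one passes to the free evolution $e^{\im t\triangle}$ by composing with the (unitary on $L^2$ under assumption (H)) wave operator $W_+=\mathrm{s{-}lim}\,e^{-\im t\triangle_V}e^{\im t\triangle}$, setting $u_+=W_+^{-1}\widetilde u_+$. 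The main obstacle I anticipate is the careful construction and calculus of $J_V$: proving the distorted Leibniz rule with acceptable error terms, and showing that $J_V$ together with its commutator with $|x|^s$ and $\partial_x^s$ generate a norm that is equivalent to $\|\cdot\|_{\Sigma_s}$ on the data and is propagated by the flow; this is the step that genuinely uses the Schwartz regularity of $V$ and the generic condition $T(0)=0$.
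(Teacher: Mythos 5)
Your plan diverges from the paper at the two points that carry the real difficulty, and at both points you assert rather than prove the key facts. First, your choice $J_V(t):=F_V^{-1}(\im\partial_\tau+2t\tau)F_V$ does make the commutator with $\im\partial_t+\triangle_V$ vanish, but it is a full first-order vector field: to run your energy estimate you need $\|J_Vu(1)\|_{L^2}$ small, i.e.\ data small in $\Sigma_1$, which is strictly stronger than the hypothesis $\|u(1)\|_{\Sigma_s}\le\epsilon$ with $s>1/2$; for $p$ close to $3$ the theorem is genuinely about fractional $s$ close to $1/2$. The paper instead takes $|J_V(t)|^s=M(t)(-t^2\triangle_V)^{s/2}M(-t)$, accepts that this does \emph{not} commute with the flow (Proposition \ref{P1} gives the commutator $\im t^{s-1}M(t)A(s)M(-t)$), and controls the extra term through the resolvent bound of Lemma \ref{lem:boundA}, which is where genericity $T(0)=0$ (i.e.\ $w(0)>0$) enters. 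Second, your two central analytic claims are exactly the open issues: (a) that $R=J_V-J$ is ``smoothing'' with mapping bounds on $L^2$, and (b) the ``distorted Leibniz rule'' with error $\lesssim\|u\|_{L^\infty}^{p-1}\|u\|_{L^2}$. Concretely, $R$ contains the piece $2t\,(F_V^{-1}\tau F_V-\sqrt{-\triangle}\,)$, whose operator norm grows linearly in $t$, and the piece $F_V^{-1}\im\partial_\tau F_V-F^{-1}\im\partial_\xi F$, whose kernel involves $\partial_\tau m_\pm$, which by \eqref{eq:tderm} grows like $\langle x\rangle^2$ with no decay on the reflected side; uniform $L^2$ bounds for such differences, and their behaviour near $\tau=0$, are precisely what requires the low-frequency analysis tied to $T(0)$. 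The paper's substitute for a Leibniz rule is the norm equivalence $\||J_V|^sf\|_{L^2}\sim\||J|^sf\|_{L^2}$ for $0\le s<1/2$ (Corollary \ref{l:ineperteq}, proved by a double Paley--Littlewood decomposition in both free and distorted frequencies, valid when $T(0)\in\{0,1\}$) together with the almost-equivalences \eqref{eq.injj}--\eqref{eq.injj1} for $s\in(1/2,1)$; one converts $|J_V|^s\to|J|^s$, uses the Hayashi--Naumkin estimate for the free $|J|^s$, and converts back, losing $t^{s+\varepsilon-1/2}$ twice, which is exactly why the condition $\frac{p+1}{2}-2s-2\varepsilon>1$, hence $p>3$ with $s$ near $1/2$, appears. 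None of this machinery is replaced by anything in your sketch.

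A further gap: your pointwise bound $\|u(t)\|_{L^\infty}\lesssim t^{-1/2}\bigl(\|u\|_{L^2}+\|J_Vu\|_{L^2}\bigr)$ is not automatic for your spectral $J_V$. In the free case it comes from the factorization $J=M(t)\,2\im t\,\partial_x\,M(-t)$, and in the paper it is immediate from the definition of $|J_V(t)|^s$ by $M(t)$-conjugation combined with the distorted Sobolev inequality of Lemma \ref{lem:Sob} (see \eqref{eq:decay1}); with your definition you would have to establish an analogous factorization of the distorted flow through $F_V$, again nontrivial. Your final scattering step via Duhamel, the decay \eqref{eq:dispersion}, and the wave operators is standard and matches what the paper leaves to the reader; but as it stands the core of the proposal (construction and calculus of $J_V$, the Leibniz-type estimate, and the $t^{-1/2}$ interpolation inequality at fractional regularity $s>1/2$) is not proved and would not close for the class of data in the statement.
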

The hypothesis $\sigma (\triangle _V )=(-\infty ,0]$ is necessary since
otherwise for any $s>1/2$ there are periodic solutions $u(t,x) =e^{{\bf i}\lambda t}\phi _\lambda (x)$
of arbitrarily small  $\Sigma _s$ norm.  The interesting case is for $ p\in (3,5)$ since the case $p\ge 5$ follows from \cite{GY,We1}.
The case $V=0$  is due to   \cite{MS}.

  If $\sigma (\triangle _V )=(-\infty ,0]$, the   existence of wave operators  intertwining
 $\triangle _V$ and $\triangle  $
 and of  Strichartz and dispersive   estimates  for $e^{{\bf i}t\triangle _V }$  is well known, see \cite{GY,We1,We2}. Such estimates are not   sufficient  to prove
 Theorem  \ref{thm:dispersion} even in the case $V=0$.

 The argument in  \cite{MS}   is based on the introduction of homogeneous
$\dot {\mathcal{H}}^k(t)$ norms,
defined substituting the standard derivative $\frac{\partial}{\partial x_j}$ with
operators $J_j(t)$, see Sect. \ref{sec:hom}. In \cite{MS} it is proved almost
invariance of these norms and, by a form of  the Sobolev embedding theorem,
the dispersion \eqref{eq:dispersion}. Such use of invariant norms goes back
to the work on the wave equation by
Klainerman, see for example \cite{Klainerman}.

 The  development of a theory of  invariant norms in the case
 of non translation invariant equations such as \eqref{eq:PotNLS}
 is an important technical problem. Here our main goal is
 to adapt the  framework of \cite{MS} for $d=1$ and  to introduce
appropriate  surrogates $|J_V(t)|^s  $ for the   operators $|J (t)|^s  $ see  Sect. \ref{sec:hom}.

The operators  $|J_V(t)|^s  $  are   used to define  homogeneous spaces $\dot {\mathcal{H}}  _V^s(t)$ which are then shown to be
almost invariant.

The argument is more complicated than in \cite{MS} because of
the presence of an additional commutator.  But we can   show that if $\triangle _V$
is generic, in the sense of Hypothesis (H), then the commutator can be treated
by a bootstrap argument.

Another complication is that the
 $|J_V(t)|^s  $ do not enjoy     Leibnitz rule  type properties  like
  $|J (t)|^s  $, which play a key role in \cite{MS}. Nonetheless,
  we are able to treat $|J_V(t)|^s  $   by switching from  $|J_V(t)|^s  $ to $|J (t)|^s  $, by using the   Leibnitz rule for   $|J (t)|^s  $,
  and by going back to $|J_V(t)|^s  $.

  In the  part of the argument on the  Leibnitz rule,
  an essential   role is played  by the
observation    that   $ \| \cdot \| _{\dot {\mathcal{H}} _V^s(t)} \approx
\| \cdot \| _{\dot {\mathcal{H}}  ^s(t)}$ with fixed constants independent of
$t$  when $0\le s <1/2$. The proof of this  equivalence   is based on Paley-Littlewood decompositions
associated to phase spaces both of
$\triangle $ and $\triangle _V$.
We are able to prove this equivalence when the  transmission
coefficient $T(\tau )$ is such that  either $T(0)=0$  (the generic case)
or $T(0)=1$. Notice incidentally that the inclusion of
this non generic case at least in this part of the paper is natural,
since the fact that $T(0)=1$ makes
 $\triangle _V$ more similar to $\triangle $  than the case when $T(0)=0$ (recall that $T(0)=1$ for $\triangle  $).

We introduce  now some of the notation  used later. Inequalities of type $A \lesssim B$   mean the existence of a constant $C>0$ so that $ A \leq C B.$ Similarly, $A \sim B$ means $A \lesssim B$ and $B \lesssim A$.
The standard scalar product in $ L^2= L^2({\mathbf R})$ will be denoted by $\langle.,.\rangle_{L^2}.$
We use the notation $L^p_x$ that means $ L^p({\mathbf R})$.  $L^p_t(X)$ stands for the $L^p$ norm of functions with values in Banach space $X.$ The homogeneous Sobolev space
$\dot{\mathcal{H}}^s({\mathbf R})$     (resp.  perturbed Sobolev space $\dot{\mathcal{H}}^s_V({\mathbf R})$)  for $s \geq 0$ is defined as the closure of $C_0^\infty({\mathbf R})$ functions with respect to the norm
$$\|(-\triangle )^{\frac s2} f \|_{L^2_x} \text{( resp . $  \|(-\triangle +V)^{\frac s2} f \|_{L^2_x} $). }$$  
 These norms are used in two   cases: functions depending only in $x$ and functions depending on both $t$ and $x$.

\section{Definition of $|J_V(t)|^{s}$}
\label{sec:hom}
In this section we assume $x\in {\mathbf R} ^d$ with $d$ a generic dimension and
we consider
\begin{equation*}\label{eq:LS}
 ({\bf i} \partial _t  + \triangle     ) u=
 0  .
\end{equation*}
Recall that the fundamental solution is given by
$e^{{\bf i}t\triangle} (x,y)= \frac{e^{{\bf i}\frac{(x-y)^2}{4t}}}
  { (4\pi {\bf i}t  ) ^{\frac{d}{2}}}$  for $t> 0$.
	
\noindent Consider the Fourier transform  $F$ and its inverse:
\begin{align}\label{eq:FT}
   & Ff(x)=
 ( 2\pi )^{-\frac{d}{2}}\int  _{{\mathbf R} ^d} e^{{\bf i}x\cdot y} f(y) dy, \\ \nonumber & F^{-1}f(x)=
 ( 2\pi )^{-\frac{d}{2}} \int _{{\mathbf R} ^d}  e^{-{\bf i}x\cdot y} f(y) dy.
\end{align}
We introduce also the dilation operator
$  D(t) \psi (x)
=  (2{\bf i}t )^{-\frac{d}{2}} \psi (\frac{x}{2t})$
and the multiplier operator $  M(t) \psi (x)
=  e^{\frac{{\bf i}x^2}{4t}}\psi (x).$
Then we have the following well known formula
\begin{equation*}\label{eq:FunSol1}e^{{\bf i}t\triangle} = M(t)
D(t)F^{-1} M(t).
\end{equation*}
Let $g(x)$ be a function and denote by $g(q)$ the multiplier operator
$g(q)\psi (x):= g(x)\psi (x)$.  We set  $p _j:={\bf i}\partial _{x_j}$ and $p=(p_1,..., p_d)$. More
generally, set $g(p) :=F^{-1} g(q) F $. The following
identity is well-known:
\begin{equation}\label{eq:identity}e^{{\bf i}t\triangle}
g(q)e^{-{\bf i}t\triangle}= M(t)g(2tp)   M(-t).
\end{equation}
for any $g(x)$. With an abuse of notation we will denote the
operator $g(q)$ by $g(x)$. Notice that we have
\begin{equation*}\label{eq:Comm}\begin{aligned}&  \left [
{\bf i} \partial _t  + \triangle, e^{{\bf i}t\triangle}
g(x)e^{-{\bf i}t\triangle} \right ] =\\&  e^{{\bf i}t\triangle}\left [ - \triangle ,g(x)\right ] e^{-{\bf i}t\triangle}
+ e^{{\bf i}t\triangle}\left [  \triangle ,g(x)\right ]
e^{-{\bf i}t\triangle}  =  0,
\end{aligned}
\end{equation*}
so obviously the same commutation rule holds for the r.h.s. of
\eqref{eq:identity}. In particular for $g(x)=x_j$ we get
on the r.h.s. of \eqref{eq:identity} the operators $J_j=2t\im
e^{\frac{{\bf i}x^2}{4t}} \partial _{x _j}e^{-\frac{{\bf i}x^2}{4t}}= 2t\im
\partial _{x _j}+{x _j}$ and we have $$ \left [
{\bf i} \partial _t  + \triangle , J _j\right ] =0.$$
We introduce for any $s\ge 0$ the following two operators:
\begin{eqnarray} \label{eq:Js} & |J (t)|^{s}  := M(t)
(-t^2\triangle )^{\frac{s}{2}} M(-t) \\&
\label{eq:JVs}|J_V(t)|^{s} := M(t)
(-t^2\triangle_V)^{\frac{s}{2}} M(-t).
\end{eqnarray}

\section{Commutative properties of  $|J_V(t)|^{s}$}

We  start the section by establishing some useful
commutator relations. In this section $x\in {\mathbf R} ^d$ with $d$ a generic dimension
and  $ M(t) = e^{{\bf i}|x|^2/4t}$.
\begin{lemma}\label{L1}
We have the following identities:
\begin{eqnarray}\label{M1}\nonumber
\left[{\bf i}\partial_t, M(t) \right]=\frac{x^2}{4t^2} M(t), \quad
\left[{\bf i}\partial_t, M(-t) \right]=-\frac{x^2}{4t^2} M(-t).
 \end{eqnarray}
\end{lemma}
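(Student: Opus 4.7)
The plan is to prove both identities by a direct application of the Leibniz rule for $\partial_t$, since $M(\pm t)$ is simply a multiplication operator whose only $t$-dependence sits in the exponent. For any test function $\psi$, I would expand
\begin{equation*}
[{\bf i}\partial_t, M(t)]\psi = {\bf i}\partial_t(M(t)\psi) - M(t)({\bf i}\partial_t \psi) = {\bf i}(\partial_t M(t))\,\psi,
\end{equation*}
so the commutator reduces to multiplication by ${\bf i}\,\partial_t M(t)$.

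Now I would compute $\partial_t M(t)$ explicitly. Since $M(t) = e^{{\bf i}|x|^2/(4t)}$, differentiating the exponent in $t$ yields
\begin{equation*}
\partial_t M(t) = -\frac{{\bf i}|x|^2}{4t^2}\,M(t),
\end{equation*}
and multiplying by ${\bf i}$ gives $\frac{|x|^2}{4t^2} M(t)$, which is the first identity (with the convention $x^2 = |x|^2$ employed in the statement). For the second identity I would repeat the same computation with $M(-t) = e^{-{\bf i}|x|^2/(4t)}$; the extra minus sign in the exponent produces $\partial_t M(-t) = \frac{{\bf i}|x|^2}{4t^2}M(-t)$, and multiplication by ${\bf i}$ yields $-\frac{|x|^2}{4t^2}M(-t)$, as claimed.

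There is no genuine obstacle here: $M(\pm t)$ commutes with itself in the product $M(t)\psi$ and $t$ enters only through a scalar factor, so the Leibniz rule applies without subtlety and no functional-analytic care about domains is needed beyond working on $C_0^\infty({\mathbf R}^d)$, where every quantity in sight is smooth.
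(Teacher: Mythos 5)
Your proof is correct and is essentially the same computation as the paper's: the commutator with the multiplication operator $M(\pm t)$ reduces to multiplication by ${\bf i}\,\partial_t M(\pm t)$, which is then evaluated directly from the explicit exponent. Both the sign bookkeeping for $M(-t)$ and the final expressions match the paper's argument.
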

\begin{proof}
A simple calculation gives
\begin{eqnarray}\label{M1-proof}\nonumber
{\bf i}\partial_t M(t)f -M(t){\bf i}\partial_t f=\left({\bf i}\partial_t M(t)
\right)f=\frac{x^2}{4t^2} M(t).
\end{eqnarray}
The second relation can be verified similarly.
\end{proof}

Furthermore, we shall prove the following:
\begin{lemma}\label{L2} We have:
\begin{eqnarray}\label{M2}
\nonumber \left[\triangle , M(t) \right]=M(t)\left(\frac{{\bf i}\, d }{2t}-\frac{x^2}{4t^2}+\frac{{\bf i}x\cdot  \nabla}{t} \right);\\ \nonumber
\left[\triangle , M(-t)
\right]=M(-t)\left(-\frac{{\bf i}\, d}{2t}-\frac{x^2}{4t^2}-\frac{{\bf i}x\cdot  \nabla}{t}
\right).
 \end{eqnarray}
\end{lemma}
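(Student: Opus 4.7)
The statement is a direct calculation of commutators of the Laplacian with the multiplication operator $M(t)=e^{{\bf i}|x|^2/4t}$, so my plan is simply to expand both sides on a test function $f$ and compare.

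First, I would record the basic derivatives of $M(t)$ as a multiplication operator. Since $M(t)=e^{{\bf i}|x|^2/4t}$, one has $\partial_{x_j} M(t)=\frac{{\bf i}x_j}{2t}M(t)$, whence
\begin{equation*}
\partial_{x_j}^2 M(t) \;=\; \Bigl(\frac{{\bf i}}{2t}-\frac{x_j^2}{4t^2}\Bigr)M(t).
\end{equation*}
These are the only computational facts needed.

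Next I would compute $\triangle(M(t)f)$ for a smooth $f$ via the product rule. Writing $\partial_{x_j}(M(t)f)=\frac{{\bf i}x_j}{2t}M(t)f+M(t)\partial_{x_j}f$ and differentiating once more, the cross terms produce a factor $\frac{{\bf i}x_j}{t}M(t)\partial_{x_j}f$. Summing over $j=1,\dots,d$ gives
\begin{equation*}
\triangle\bigl(M(t)f\bigr) \;=\; M(t)\,\triangle f \;+\; M(t)\Bigl(\frac{{\bf i}\,d}{2t}-\frac{x^2}{4t^2}+\frac{{\bf i}\,x\cdot\nabla}{t}\Bigr)f,
\end{equation*}
and subtracting $M(t)\triangle f$ yields the first identity. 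The second identity for $M(-t)$ is obtained by substituting $t\mapsto -t$ everywhere, which flips the signs of the two terms containing a single factor of $t$ in the denominator (i.e.\ $\frac{{\bf i}d}{2t}$ and $\frac{{\bf i}x\cdot\nabla}{t}$) while preserving $\frac{x^2}{4t^2}$.

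There is no real obstacle here; the only thing to be careful about is bookkeeping the sign contributions from the two terms produced by $\partial_{x_j}^2$ hitting $M(t)$ versus the cross term where one derivative hits $M(t)$ and the other hits $f$. One could also write a slightly slicker one-line derivation using the identity $[\triangle,M(t)]=[\nabla,M(t)]\cdot\nabla+\nabla\cdot[\nabla,M(t)]$ with $[\nabla,M(t)]=\frac{{\bf i}x}{2t}M(t)$, but the bare product-rule computation is already short enough that I would just present it directly.
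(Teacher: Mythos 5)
Your proof is correct and follows essentially the same route as the paper: a direct product-rule expansion of $\triangle(M(t)f)$, which is exactly the computation $[\triangle,M(t)]f=f\triangle M(t)+2\nabla M(t)\cdot\nabla f$ used there. The only cosmetic difference is that the paper gets the second identity by taking complex conjugates rather than by the substitution $t\mapsto -t$; both are valid since $M(-t)=\overline{M(t)}$.
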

\begin{proof}
For the first relation we have
\begin{equation}\label{M2-proof}\begin{aligned} &
  \left[\triangle , M(t) \right]f=f\triangle M(t)+2\nabla M(t)\cdot \nabla f \\ &
=M(t)\frac{{\bf i}\, d}{2t}f-M(t)\frac{x^2}{4t^2}f+M(t)\frac{{\bf i}x\cdot  \nabla
f}{t}.\end{aligned}\nonumber
\end{equation}
The   second relation  follows taking complex conjugates.
\end{proof}

>From Lemma \ref{L1} and Lemma \ref{L2} we get:
\begin{lemma}\label{L3}
The following commutator relations hold:
\begin{eqnarray}\label{M3}
\nonumber \left[{\bf i}\partial_t+\triangle ,
M(t) \right]=M(t)\left(\frac{{\bf i}d}{2t}+\frac{{\bf i}x \cdot\ \nabla }{t} \right),\\
\left[{\bf i}\partial_t+\triangle, M(-t)
\right]=M(-t)\left(-\frac{{\bf i}d}{2t}-\frac{x^2}{2t^2}-\frac{{\bf i}x\cdot\nabla}{t}
\right).\nonumber
 \end{eqnarray}
\end{lemma}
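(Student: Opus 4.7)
The plan is to obtain Lemma~\ref{L3} as a direct corollary of Lemmas~\ref{L1} and \ref{L2} by exploiting linearity of the commutator bracket in its first slot. That is, I would write
\begin{equation*}
\left[{\bf i}\partial_t+\triangle,\,M(\pm t)\right] \;=\; \left[{\bf i}\partial_t,\,M(\pm t)\right] + \left[\triangle,\,M(\pm t)\right],
\end{equation*}
and substitute the four identities already established.

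For the $M(t)$ identity I would add the first relation of Lemma~\ref{L1} to the first relation of Lemma~\ref{L2}. Since $M(t)$ is the operator of multiplication by $e^{{\bf i}|x|^2/4t}$, it commutes with multiplication by $x^2/(4t^2)$, so the $+\frac{x^2}{4t^2}M(t)$ contributed by $[{\bf i}\partial_t, M(t)]$ exactly cancels the $-M(t)\frac{x^2}{4t^2}$ coming from $[\triangle, M(t)]$. What survives is precisely $M(t)\bigl(\frac{{\bf i}d}{2t} + \frac{{\bf i}x\cdot \nabla}{t}\bigr)$, which is the first claim.

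For the $M(-t)$ identity the sign structure is the key bookkeeping point and the place where I would be most careful. The time-derivative commutator now contributes $-\frac{x^2}{4t^2}M(-t)$, while $[\triangle, M(-t)]$ contributes another $-M(-t)\frac{x^2}{4t^2}$; these two $x^2$-terms now \emph{add} rather than cancel, producing a total of $-\frac{x^2}{2t^2}M(-t)$. Combined with the remaining $-\frac{{\bf i}d}{2t}$ and $-\frac{{\bf i}x\cdot\nabla}{t}$ pieces from Lemma~\ref{L2}, this gives the second identity exactly as stated.

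There is no real obstacle here: everything is linear algebra on known commutators, and the only point that warrants checking is the sign reversal in the $M(-t)$ case (where the cancellation of the purely quadratic phase term fails). Once that is checked, both equalities follow without further computation.
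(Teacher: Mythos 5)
Your proposal is correct and is essentially the paper's own proof: the authors also obtain Lemma \ref{L3} by additivity of the commutator, $\left[{\bf i}\partial_t+\triangle, M(\pm t)\right]=\left[{\bf i}\partial_t, M(\pm t)\right]+\left[\triangle, M(\pm t)\right]$, and then substitute Lemmas \ref{L1} and \ref{L2}. Your sign bookkeeping in both cases (cancellation of the $x^2$ terms for $M(t)$, their reinforcement to $-\frac{x^2}{2t^2}$ for $M(-t)$) matches the stated identities.
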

\begin{proof}
We shall check only the first relation, which follows
directly from above Lemmas   and
\begin{eqnarray}\label{M3-proof}
\left[{\bf i}\partial_t+ \triangle, M(t)
\right]=\left[{\bf i}\partial_t, M(t)
\right]+ \left[\triangle, M(t) \right] . \nonumber
\end{eqnarray}

\end{proof}

\begin{lemma}\label{L4} We have
\begin{eqnarray}\label{deltaV}
 \left[{\bf i}\partial_t+ \triangle_V, (-t^2\triangle_V)^{\frac{s}{2}}
 \right]=\frac{{\bf i}s}{t}(-t^2\triangle_V)^{\frac{s}{2}}.
 \end{eqnarray}

\end{lemma}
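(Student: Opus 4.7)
The identity splits cleanly into two pieces, and the key observation is that $\triangle_V$ is time-independent, so its functional calculus commutes with itself for different parameter values.

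First I would note that since $\triangle_V$ does not depend on $t$, for each fixed $t>0$ the operator $(-t^2\triangle_V)^{s/2}$ is defined by spectral calculus (using hypothesis (H), $\sigma(\triangle_V)=(-\infty,0]$, so $-\triangle_V\ge 0$ is non-negative self-adjoint). In particular $\triangle_V$ and $(-t^2\triangle_V)^{s/2}$ are both Borel functions of the same self-adjoint operator, hence commute on the natural spectral domain:
\[
\left[\triangle_V,\, (-t^2\triangle_V)^{s/2}\right]=0.
\]
This reduces the claim to evaluating $\bigl[{\bf i}\partial_t, (-t^2\triangle_V)^{s/2}\bigr]$.

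Next, for $t>0$ (the regime $t\ge 1$ of interest in \eqref{eq:PotNLS}) we can factor out the time from the spectral parameter, writing
\[
(-t^2\triangle_V)^{s/2}=t^{s}(-\triangle_V)^{s/2},
\]
where $(-\triangle_V)^{s/2}$ is a fixed time-independent operator. Then, acting on a sufficiently regular $f(t,x)$ (say $f\in C^1_t$ with values in the domain of $(-\triangle_V)^{s/2}$), the product rule gives
\[
{\bf i}\partial_t\bigl(t^{s}(-\triangle_V)^{s/2}f\bigr)={\bf i}s\,t^{s-1}(-\triangle_V)^{s/2}f+t^{s}(-\triangle_V)^{s/2}\,{\bf i}\partial_t f,
\]
so that
\[
\left[{\bf i}\partial_t,\,t^{s}(-\triangle_V)^{s/2}\right]={\bf i}s\,t^{s-1}(-\triangle_V)^{s/2}=\frac{{\bf i}s}{t}(-t^2\triangle_V)^{s/2}.
\]
Combining the two commutator computations yields \eqref{deltaV}.

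The only subtle point, and the main thing to be careful about, is the justification of the spectral calculus step: one must ensure that the identity $\left[\triangle_V,f(\triangle_V)\right]=0$ and the factorization $(-t^2\triangle_V)^{s/2}=t^{s}(-\triangle_V)^{s/2}$ hold as identities of (unbounded) operators on a common dense domain. Since $-\triangle_V$ is self-adjoint and non-negative under (H), this is standard: both sides are well-defined on the spectral subspace $\{g(\triangle_V):\,g \text{ bounded Borel}\}$ applied to Schwartz functions, and one extends by density. Apart from this bookkeeping, no further estimates are needed — the lemma is essentially a direct consequence of spectral calculus and Leibniz in $t$.
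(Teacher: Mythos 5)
Your argument is correct and coincides with the paper's own proof: split the commutator into the $\triangle_V$ piece (which vanishes because functions of $\triangle_V$ given by spectral calculus commute with it) and the ${\bf i}\partial_t$ piece, where the factorization $(-t^2\triangle_V)^{s/2}=t^{s}(-\triangle_V)^{s/2}$ and the Leibniz rule in $t$ give $\frac{{\bf i}s}{t}(-t^2\triangle_V)^{s/2}$. Your extra remarks on domains are harmless bookkeeping that the paper leaves implicit.
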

\begin{proof}
To prove \eqref{deltaV} we shall use the fact that
$(-\triangle_V)^{{s}/{2}}$ and $\triangle_V$ commute. Thus, we
have
\begin{eqnarray}\label{deltaV-proof}
\nonumber \left[{\bf i}\partial_t+ \triangle_V,
(-t^2\triangle_V)^{\frac{s}{2}}
\right]f\\
\nonumber =\left[{\bf i}\partial_t,(-t^2\triangle_V)^{\frac{s}{2}}
\right]f+ \left[\triangle_V,
(-t^2\triangle_V)^{\frac{s}{2}} \right]f\\
={\bf i}\partial_t\left((-t^2\triangle_V)^{\frac{s}{2}}\right)f=\frac{{\bf i}s}{t}(-t^2\triangle_V)^{\frac{s}{2}}f.\nonumber
\end{eqnarray}
\end{proof}

Now we are ready to establish the main commutative
property of the operator $|J_V(t)|^{s} $  with $s \geq 0$  defined in
\eqref{eq:JVs}.
\begin{proposition}\label{P1}
We have the relation:
\begin{equation}\label{JVs}
  \left[{\bf i}\partial_t+\triangle_V, |J_V(t)|^{s} \right]
 ={\bf i}t^{s-1}M(t) A(s)  M(-t)
\end{equation}
where
$$
 A(s):= { s} (- \triangle_V)^{\frac{s}{2}}+ \left[x \cdot \nabla ,
(- \triangle_V)^{\frac{s}{2}}\right] .
$$
\end{proposition}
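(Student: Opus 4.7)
Write $L := \im\partial_t + \triangle_V$ and $B(t):=(-t^2\triangle_V)^{s/2}=t^s(-\triangle_V)^{s/2}$, so that $|J_V(t)|^s = M(t)B(t)M(-t)$. The plan is to expand
$$
[L,\,M(t)B(t)M(-t)] = [L,M(t)]\,B(t)M(-t) + M(t)[L,B(t)]M(-t) + M(t)B(t)[L,M(-t)]
$$
by the Leibniz rule, evaluate each commutator with the previous lemmas, and then rearrange so everything is conjugated by $M(\pm t)$.

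The middle term is immediate: by Lemma \ref{L4} one has $[L,B(t)] = \frac{\im s}{t} B(t)$, and since $\frac{1}{t}B(t) = t^{s-1}(-\triangle_V)^{s/2}$, this contributes $\im s\, t^{s-1} M(t)(-\triangle_V)^{s/2} M(-t)$, which is exactly the first piece $s(-\triangle_V)^{s/2}$ of $A(s)$, conjugated by $M(\pm t)$. For the outer terms I would observe that since $V$ is a multiplication operator it commutes with the multiplication operators $M(\pm t)$, hence $[L,M(\pm t)] = [\im\partial_t+\triangle,M(\pm t)]$, and Lemma \ref{L3} gives
$$
[L,M(t)] = M(t)R_+,\qquad [L,M(-t)] = M(-t)R_-,
$$
with $R_+ = \frac{\im d}{2t} + \frac{\im x\cdot\nabla}{t}$ and $R_- = -\frac{\im d}{2t} - \frac{x^2}{2t^2} - \frac{\im x\cdot\nabla}{t}$.

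To bring the last term into the form $M(t)(\cdot)M(-t)$, insert $M(t)M(-t)=I$ and compute the conjugate $\widetilde{R}_- := M(-t)R_- M(t)$. Using $\partial_{x_j} M(t) = \frac{\im x_j}{2t}M(t)$ one gets $M(-t)(x\cdot\nabla)M(t) = x\cdot\nabla + \frac{\im|x|^2}{2t}$, and therefore
$$
\widetilde{R}_- = -\frac{\im d}{2t} - \frac{x^2}{2t^2} - \frac{\im}{t}\Bigl(x\cdot\nabla + \frac{\im|x|^2}{2t}\Bigr) = -\frac{\im d}{2t} - \frac{\im x\cdot\nabla}{t} = -R_+.
$$
This is the key cancellation of the quadratic potential terms, and is the step I expect to be the main obstacle to anticipate correctly; everything else is bookkeeping.

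With $\widetilde{R}_- = -R_+$, the sum of the two outer contributions becomes
$$
M(t)\bigl(R_+ B(t) - B(t)R_+\bigr)M(-t) = M(t)[R_+,B(t)]M(-t).
$$
Since the scalar term $\frac{\im d}{2t}$ in $R_+$ commutes with $B(t)$, this reduces to $\frac{\im}{t}M(t)[x\cdot\nabla,B(t)]M(-t) = \im t^{s-1} M(t)[x\cdot\nabla,(-\triangle_V)^{s/2}]M(-t)$. Adding back the middle term gives
$$
[L,|J_V(t)|^s] = \im t^{s-1} M(t)\Bigl(s(-\triangle_V)^{s/2} + [x\cdot\nabla,(-\triangle_V)^{s/2}]\Bigr)M(-t),
$$
which is \eqref{JVs}. \qed
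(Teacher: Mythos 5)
Your proof is correct and follows essentially the same route as the paper: expand $[\im\partial_t+\triangle_V,\,M(t)(-t^2\triangle_V)^{s/2}M(-t)]$ by the Leibniz rule, use Lemmas \ref{L3} and \ref{L4} for the commutators with $M(\pm t)$ and with $(-t^2\triangle_V)^{s/2}$, and cancel the quadratic $x^2/t^2$ terms. The only (cosmetic) difference is that you realize this cancellation by conjugating, via $M(-t)\,R_-\,M(t)=-R_+$, whereas the paper cancels it through the inline identity $[x\cdot\nabla,M(-t)]=-\frac{{\bf i}x^2}{2t}M(-t)$; the algebraic content is identical.
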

\begin{proof}
The proof relies on Lemmas \ref{L1}--\ref{L4} and the following
commutator equalities:
\begin{eqnarray*}
\left[AB,C \right]=A\left[B,C \right]+\left[A,C \right]B,\\
\left[A,BC \right]=\left[A,B \right]C+B\left[A,C \right].
\end{eqnarray*}

Indeed, we have
\begin{equation}\label{JVs-proof1}\begin{aligned} &
  \left[{\bf i}\partial_t+ \triangle_V, |J_V(t)|^{s}\right]  =\left[{\bf i}\partial_t+ \triangle_V, M(t) (-t^2\triangle_V)^{\frac{s}{2}} M(-t) \right]\\ &
 =\left[{\bf i}\partial_t+ \triangle_V, M(t) \right](-t^2\triangle_V)^{\frac{s}{2}} M(-t) +M(t)\left[{\bf i}\partial_t+ \triangle_V,(-t^2\triangle_V)^{\frac{s}{2}} M(-t)\right]\\&=\frac{{\bf i}d}{2t}|J_V(t)|^{s}+\frac{{\bf i}}{t}M(t)x\cdot\nabla (-t^2\triangle_V)^{\frac{s}{2}}M(-t)\\
& +M(t)\left[{\bf i}\partial_t+ \triangle_V,(-t^2\triangle_V)^{\frac{s}{2}}\right] M(-t) + M(t)(-t^2\triangle_V)^{\frac{s}{2}}\left[{\bf i} \partial_t+ \triangle_V, M(-t)\right]\\&=\frac{{\bf i}d}{2t}|J_V(t)|^{s}+\frac{{\bf i}}{t}M(t)x\cdot\nabla (-t^2\triangle_V)^{\frac{s}{2}}M(-t)\\
&+\frac{{\bf i}s}{t}|J_V(t)|^{s}+M(t)
(-t^2\triangle_V)^{\frac{s}{2}}M(-t)\left(-\frac{{\bf i}d}{2t}-\frac{x^2}{2t^2}-\frac{{\bf i}x\cdot\nabla}{t}\right)\\
&=\frac{{\bf i}s}{t}|J_V(t)|^{s}+\frac{{\bf i}}{t}M(t)x\cdot\nabla (-t^2\triangle_V)^{\frac{s}{2}}M(-t)\\
&-\frac{{\bf i}}{t}M(t)
(-t^2\triangle_V)^{\frac{s}{2}} M(-t) x\cdot\nabla
-M(t)(-t^2\triangle_V)^{\frac{s}{2}}\frac{x^2}{2t^2}M(-t)\\
&=\frac{{\bf i}s}{t}|J_V(t)|^{s}+\frac{{\bf i}}{t}M(t)\left[x\cdot\nabla,
(-t^2\triangle_V)^{\frac{s}{2}}M(-t)\right]
-M(t)(-t^2\triangle_V)^{\frac{s}{2}}\frac{x^2}{2t^2}M(-t).
 \end{aligned}\nonumber
\end{equation}
Note that
\begin{eqnarray}\label{xnabla}
\nonumber\left[x\cdot\nabla,(-t^2\triangle_V)^{\frac{s}{2}}M(-t)\right]\\
\nonumber=\left[x\cdot\nabla,(-t^2\triangle_V)^{\frac{s}{2}}\right]M(-t)
+(-t^2\triangle_V)^{\frac{s}{2}}\left[x\cdot\nabla,M(-t)\right]\\
=\left[x\cdot\nabla,(-t^2\triangle_V)^{\frac{s}{2}}\right]M(-t)
-(-t^2\triangle_V)^{\frac{s}{2}}\frac{{\bf i}x^2}{2t}M(-t)
 \end{eqnarray}
and hence we get
\begin{eqnarray}\label{JVs-proof2}
\nonumber \left[{\bf i}\partial_t+ \triangle_V, |J_V(t)|^{s}\right]\\
 =\frac{{\bf i}s}{t}|J_V(t)|^{s}+\frac{{\bf i}}{t}M(t)\left[x\cdot\nabla, (-t^2\triangle_V)^{\frac{s}{2}}
\right]M(-t)\nonumber
 \end{eqnarray}
The proof of \eqref{JVs} is completed.
\end{proof}

In next lemma we shall assume $d=1$.

\begin{lemma}
\label{lem:boundA} Assume $d=1$ and
$A(s)$ be the operator that appears in \eqref{JVs} with $s<2$. Then for a fixed constant
   $C_s$ we have the inequality \begin{equation}\label{eq:boundA}\begin{aligned} &
  \| A(s)f\|  _{L^1_x}\le C_s\|  f\|  _{L^\infty_x}.
\end{aligned}\end{equation}
\end{lemma}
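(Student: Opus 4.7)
My strategy is to exploit a cancellation coming from the free case. Writing $B:=-\triangle_V$ and $B_0:=-\triangle$, the analogous expression with $V\equiv0$,
\[
A_0(s):=sB_0^{s/2}+[x\partial_x,B_0^{s/2}],
\]
vanishes identically, because $B_0^{s/2}$ is homogeneous of degree $s$ under the unitary dilation $U_\lambda\psi(x)=\lambda^{1/2}\psi(\lambda x)$ generated by $D=x\partial_x+\tfrac12$; differentiating $U_\lambda^{-1}B_0^{s/2}U_\lambda=\lambda^sB_0^{s/2}$ at $\lambda=1$ gives $[D,B_0^{s/2}]=-sB_0^{s/2}$, so $A_0(s)\equiv 0$. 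Subtracting this null identity from $A(s)$ yields
\[
A(s)=sT+[x\partial_x,T],\qquad T:=B^{s/2}-B_0^{s/2}.
\]
It is therefore enough to show that the Schwartz kernels $K_T(x,y)$ and $(x\partial_x+y\partial_y+1)K_T(x,y)$ (the kernel of $[x\partial_x,T]$) both lie in $L^1({\mathbf R}^2)$: then the bound $\|A(s)f\|_{L^1}\le\|f\|_{L^\infty}\iint(|K_T|+|(x\partial_x+y\partial_y+1)K_T|)\,dx\,dy$ gives \eqref{eq:boundA}.

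To access $K_T$ I would apply the Balakrishnan subordination formula $B^{s/2}=c_s\int_0^\infty t^{-s/2-1}(I-e^{-tB})\,dt$ (valid for $0<s<2$) together with the Duhamel identity $e^{-tB_0}-e^{-tB}=\int_0^t e^{-(t-\tau)B}V\,e^{-\tau B_0}\,d\tau$, which produce the explicit representation
\[
T=c_s\int_0^\infty t^{-s/2-1}\!\int_0^t e^{-(t-\tau)B}\,V\,e^{-\tau B_0}\,d\tau\,dt,
\]
where the Schwartz multiplier $V$ appears factored out. The Gaussian bounds $|e^{-uB_0}(x,y)|,\,|e^{-uB}(x,y)|\le Cu^{-1/2}e^{-c(x-y)^2/u}$ (valid for $B$ because $\sigma(B)=[0,\infty)$ and $V$ is Schwartz) combined with the Schwartz decay of $V$ give pointwise control of $K_T$. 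The commutator $[x\partial_x,T]$ is handled the same way after differentiating the representation and applying Leibniz; the extra polynomial factors in $x,y$ are absorbed by the rapid decay of $V$ and by the Gaussian factors $\exp(-c(x-z)^2/(t-\tau))$, $\exp(-c(z-y)^2/\tau)$.

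The hard part will be the large-$t$ behavior of the outer integral. A naive estimate of the inner double integral in $L^1_{x,y}$ is uniformly bounded by $\|V\|_{L^1}$, which yields only $\|K_T\|_{L^1({\mathbf R}^2)}\lesssim\int_0^\infty t^{-s/2}\,dt=\infty$ for $s<2$. To recover the missing decay I would split $\int_0^t d\tau=\int_0^{t/2}+\int_{t/2}^t$, and on each half use the pointwise $L^\infty$ decay of size $t^{-1/2}$ of the heat kernel whose time exceeds $t/2$, together with the Schwartz decay of $V$ that localizes the intermediate $z$-variable near the origin. This produces the additional factor needed to render the outer $t$-integral convergent at infinity. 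The small-$t$ region is routine: the $\tau$-integration of length $t$ turns the singular $t^{-s/2-1}$ into the integrable $t^{-s/2}$ for $s<2$. Hypothesis~(H) ($V$ generic, $T(0)=0$) enters implicitly, ruling out a zero-energy resonance that would spoil the uniform Gaussian bounds on $e^{-uB}$ and thereby destroy the low-frequency convergence of the subordination integral.
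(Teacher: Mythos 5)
Your starting point is correct and is in fact the same cancellation the paper exploits: $sB_0^{s/2}+[x\partial_x,B_0^{s/2}]=0$, so only the potential can contribute. But the way you then organize the estimate has a fatal structural flaw: you propose to bound the kernels of $sT$ and of $[x\partial_x,T]$, $T=(-\triangle_V)^{s/2}-(-\triangle)^{s/2}$, \emph{separately} in $L^1({\mathbf R}^2)$, and this is false in exactly the regime the lemma is needed for. Test $T$ on $f\equiv 1$: writing $(-\triangle_V)^{s/2}=c(s)\int_0^\infty\tau^{s/2-1}(-\triangle_V)(\tau-\triangle_V)^{-1}d\tau$ one gets $T1=c(s)\int_0^\infty\tau^{s/2-1}(\tau-\triangle_V)^{-1}V\,d\tau$, and in the generic case the Green function analysis (Jost functions, $w(0)\neq 0$) gives $[(\tau-\triangle_V)^{-1}V](x)\approx C(\tau)e^{-\sqrt\tau\,|x|}$ with $C(0)\neq 0$, hence $T1(x)\sim c\,|x|^{-s}$ at infinity. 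So $T$ does not map $L^\infty$ into $L^1$ (equivalently $K_T\notin L^1({\mathbf R}^2)$) for $0<s\le 1$ — and the paper applies the lemma with $s\in(1/2,3/4)$. The sum $sT+[x\partial_x,T]$ is fine only because the $|x|^{-s}$ tails of $sT1$ and of $x\partial_x(T1)\sim -s\,c\,|x|^{-s}$ cancel; by estimating the two operators separately you throw away precisely this cancellation. The paper keeps it: in Lemma \ref{lem:kato} the commutator $[x\partial_x,-\triangle_V(\tau-\triangle_V)^{-1}]$ is computed exactly, the pure $\triangle_V$ terms are cancelled against $s(-\triangle_V)^{s/2}$ by the identity $2c(s)y\int_0^\infty\tau^{s/2}(\tau+y)^{-2}d\tau=s\,y^{s/2}$, and only the term with $V_1=2V+xV'$ sandwiched between two resolvents survives, which is then estimated via pointwise resolvent bounds (Lemma \ref{lem:resolvent}, where genericity enters through $w(0)>0$).

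There are two further problems with the quantitative part of your plan. First, the convergence-at-infinity mechanism you describe does not exist: for the subordination/Duhamel representation the $L^1_{x,y}$ norm of the kernel of $e^{-(t-\tau)B}Ve^{-\tau B_0}$ is comparable to $\|V\|_{L^1}$ uniformly in $t$ (heat kernels have conserved, not decaying, $L^1$ mass in their free variable; the $t^{-1/2}$ pointwise decay and the localization of the intermediate variable $z$ by $V$ do not reduce this mass), so with absolute values inside the outer integral behaves like $\int^\infty t^{-s/2-1}\min(t,\cdot)\,dt$, which for $V\ge 0$ (all factors positive, no cancellation available) genuinely diverges at infinity when $s\le 1$ — consistent with the $|x|^{-s}$ tail above. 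The only genuine source of large-time decay here is the decay of $e^{-uB}1$ (a subcriticality/genericity effect, absent for the free factor), and even using it one only reaches $s>1$. Second, the global-in-time Gaussian upper bound for $e^{-uB}$ with a sign-changing Schwartz potential is not a consequence of ``$\sigma(B)=[0,\infty)$'' alone and requires proof (note also that the free operator has a zero-energy resonance and perfectly good Gaussian bounds, so your explanation of where Hypothesis (H) enters is not the right one; in the paper it enters through the lower bound on the Wronskian at zero energy). To salvage your route you would have to commute $x\partial_x$ inside the Duhamel/subordination representation and perform the cancellation there before taking absolute values, at which point you essentially reproduce the paper's argument with heat kernels in place of resolvents.
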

We postpone the proof of Lemma \ref{lem:boundA} to Sect. \ref{sec:boundA}.

\section{Spectral theory for $\triangle _V$}
\label{sec:spectral}
Since now on we shall always work in the space dimension $d=1$.\\
In this section we remind some classical material needed later.
Recall that the  Jost functions are  solutions $f_{
\pm } (x,\tau )=e^{\pm {\bf i}\tau x}m_{
\pm } (x,\tau )$ of $- \triangle _Vu=\tau ^2 u$ with
$$ \lim _{x\to +\infty }   {m_{ + } (x,\tau )}  =1 =
\lim _{x\to -\infty }  {m_{- } (x,\tau )}  . $$
We set  $x^+:=\max \{ 0,x \}$,   $x^-:=\max \{ 0,-x \}$  and $\langle x  \rangle =\sqrt{1+x^2}.$ We will denote by $L^{p,s}$ the space with  norm

\begin{equation}\label{eq:lps}\begin{aligned} &
  \|  u \|  _{L^{p,s}}= \| \langle x \rangle ^s  f\|  _{L^p_x}.
\end{aligned}\end{equation}

The following lemma  is well known.
\begin{lemma}
\label{lem:Jost} For $V\in \mathcal{S}({\mathbf R} )$ we have
  $m_\pm  \in C^{\infty} ({\mathbf R} ^2,{\mathbf C} )$.
    There exist  constants $C_1=C_1(\|   V    \|  _{L^{1,1}})$  and $C_2=C_2(\|   V    \|  _{L^{1,2}})$
such that:

   \begin{align}  \label{eq:kernel2}
 &  |m_\pm(x, \tau )-1|\le  C _1 \langle  x^{\mp }\rangle
\langle \tau \rangle ^{-1}\left | \int _x^{\pm \infty}\langle y \rangle |V(y)| dy \right |  \ ;
  \\&   \label{eq:tderm}   |  \partial _\tau   m_\pm(x, \tau ) | \le  C_2 (1+ x^2 ) .
     \end{align}
    \end{lemma}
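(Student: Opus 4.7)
My plan is to handle $m_+$ (the case $m_-$ follows by the reflection $x\mapsto -x$) via its Volterra integral representation. Substituting $f_+(x,\tau) = e^{{\bf i}\tau x}m_+(x,\tau)$ into $-u'' + Vu = \tau^2 u$ and imposing $m_+(x,\tau)\to 1$, $\partial_x m_+(x,\tau)\to 0$ as $x\to +\infty$, variation of parameters yields
\begin{equation*}
m_+(x,\tau) = 1 + \int_x^\infty D_\tau(y-x)\, V(y)\, m_+(y,\tau)\,dy, \qquad D_\tau(z) := \frac{e^{2{\bf i}\tau z}-1}{2{\bf i}\tau}.
\end{equation*}
Smoothness of $m_\pm$ on ${\mathbf R}^2$ comes from repeated differentiation of this equation in $x$ and $\tau$, which is permissible since $V \in \mathcal{S}$ and the Gronwall-type bounds below control every derivative.

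For \eqref{eq:kernel2} I exploit two pointwise kernel bounds: $|D_\tau(z)| \le \min\{z,1/|\tau|\}$, which gives $|D_\tau(z)| \le C\langle z\rangle/\langle\tau\rangle$ for $z\ge 0$, and the elementary inequality $\langle y-x\rangle \le C\langle y\rangle\langle x^-\rangle$ valid for $y\ge x$ (trivial when $x\ge 0$; for $x<0$ treat $y\ge 0$ and $x\le y<0$ separately, and in the latter range note that $y^- \le x^-$). Before using the sharper bound, I first establish a uniform a priori estimate on $m_+$ by running a Picard iteration with the cruder bound $|D_\tau(y-x)| \le y-x \le \langle y\rangle\langle x^-\rangle$: using the symmetrization trick $\int_{x\le y_1\le\cdots\le y_n}\prod_i \langle y_i\rangle|V(y_i)|\,dy_i = (n!)^{-1}(\int_x^\infty \langle y\rangle|V(y)|\,dy)^n$, the $n$-th term of the Neumann series $m_+ - 1 = \sum_{n\ge 1}(K_\tau)^n\mathbf{1}$ is bounded by $(n!)^{-1}\bigl(\langle x^-\rangle\|V\|_{L^{1,1}}\bigr)^n$, so summation gives $|m_+(x,\tau)|\le \exp(\langle x^-\rangle\|V\|_{L^{1,1}})$. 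Feeding this back into the Volterra equation once with the refined kernel bound produces \eqref{eq:kernel2}, with $C_1$ depending on $\|V\|_{L^{1,1}}$.

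For \eqref{eq:tderm} I differentiate the Volterra equation in $\tau$ to obtain a Volterra equation for $\partial_\tau m_+$ whose inhomogeneity is $\int_x^\infty \partial_\tau D_\tau(y-x)\, V(y)\, m_+(y,\tau)\,dy$, controlled by $|\partial_\tau D_\tau(z)| \le z^2$. Since $(y-x)^2 \le 2(1+x^2)\langle y\rangle^2$ for $y\ge x$, this inhomogeneity is bounded by $C(1+x^2)\int_x^\infty \langle y\rangle^2 |V(y)|\,|m_+(y,\tau)|\,dy$; Gronwall applied to the Volterra equation for $\partial_\tau m_+$ then yields $|\partial_\tau m_+(x,\tau)| \le C_2(1+x^2)$. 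The only delicate point, and the main potential obstacle, is that the crude a priori bound on $m_+$ carries an exponential factor $\exp(\langle y^-\rangle \|V\|_{L^{1,1}})$ which would appear inside the inhomogeneity at $y<0$; this is harmless because the Schwartz decay of $V$ makes $\int \langle y\rangle^k e^{C\langle y^-\rangle}|V(y)|\,dy$ finite for every $k$, and this contribution is absorbed into the constant $C_2$, which is therefore controlled by $\|V\|_{L^{1,2}}$ together with (at worst) finitely many Schwartz seminorms of $V$.
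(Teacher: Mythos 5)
Your overall route (the Volterra equation $m_+(x,\tau)=1+\int_x^\infty D_\tau(y-x)V(y)m_+(y,\tau)\,dy$ and iteration) is exactly the Deift--Trubowitz argument that the paper simply cites (Lemma 1, p.~130 of \cite{DT}), so a self-contained proof along these lines is the right idea; but the way you run the iteration loses an exponential factor in $x^-$, and this breaks the lemma as stated. In your Neumann series you bound every kernel factor by $|D_\tau(y_i-y_{i-1})|\le y_i-y_{i-1}\le\langle y_i\rangle\langle x^-\rangle$, so each factor contributes one power of $\langle x^-\rangle$ and your a priori bound is $|m_+(x,\tau)|\le e^{\langle x^-\rangle\|V\|_{L^{1,1}}}$, exponentially large as $x\to-\infty$ (the true growth is only linear in $\langle x^-\rangle$). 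Feeding this back once gives
\begin{equation*}
|m_+(x,\tau)-1|\le C\,\frac{\langle x^-\rangle}{\langle\tau\rangle}\int_x^\infty\langle y\rangle\,|V(y)|\,e^{\langle y^-\rangle\|V\|_{L^{1,1}}}\,dy ,
\end{equation*}
so the constant you actually obtain in \eqref{eq:kernel2} is controlled by exponentially weighted integrals of $V$, not by $\|V\|_{L^{1,1}}$ as you assert and as the statement requires. More seriously, for \eqref{eq:tderm} the same loss enters the \emph{homogeneous} part, not only the inhomogeneity you discuss: Gronwall (or iteration) for $\partial_\tau m_+(x)=G(x)+\int_x^\infty D_\tau(y-x)V(y)\partial_\tau m_+(y)\,dy$ carries the multiplicative factor $\exp\bigl(\int_x^\infty|D_\tau(y-x)||V(y)|\,dy\bigr)$, which with your kernel bound is $e^{c\langle x^-\rangle}$. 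This factor involves the free variable $x$ and cannot be absorbed by the Schwartz decay of $V$; what your argument yields is $|\partial_\tau m_+(x,\tau)|\lesssim (1+x^2)\,e^{c\langle x^-\rangle}$, which is not the uniform polynomial bound \eqref{eq:tderm}.

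The missing idea is a kernel bound in which the $x$-weights telescope. With $w(x):=1+x^-$ one checks (cases $0\le x\le y$, $x<0\le y$, $x\le y<0$, combined with $|D_\tau(z)|\le\min\{z,1/|\tau|\}$) that for $y\ge x$
\begin{equation*}
|D_\tau(y-x)|\;\le\;C\,\frac{\langle y\rangle\,w(x)}{\langle\tau\rangle\,w(y)} .
\end{equation*}
In the ordered integral over $x\le y_1\le\cdots\le y_n$ the ratios $w(y_{i-1})/w(y_i)$ telescope to $w(x)/w(y_n)\le w(x)$, so the $n$-th Neumann term is bounded by $\frac{\langle x^-\rangle}{n!}\bigl(C\langle\tau\rangle^{-1}\int_x^\infty\langle y\rangle|V(y)|dy\bigr)^n$; summing gives \eqref{eq:kernel2} with $C_1=C_1(\|V\|_{L^{1,1}})$ directly, and as a byproduct the correct linear a priori bound $|m_+(x,\tau)|\le C\langle x^-\rangle$. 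The same telescoping bound is what keeps the homogeneous part of the differentiated Volterra equation under control (a weighted Gronwall, with the inhomogeneity estimated through $|\partial_\tau D_\tau(z)|\le z^2\le C\langle y\rangle^2 w(x)^2/w(y)^2$ and $w(x)^2\le 2(1+x^2)$), producing a bound polynomial in $x$ and uniform in $\tau$ as in \eqref{eq:tderm}. Without this weight your iteration does not reach the stated estimates, even though the Volterra setup, the bound $|\partial_\tau D_\tau(z)|\le z^2$, and the smoothness claim are all fine.
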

  See Lemma 1 p. 130 \cite{DT}. The regularity  follows iterating the argument.

The transmission coefficient
$T(\tau )$ and the reflection  coefficients
$R_\pm (\tau )$   are defined by  the formula
   \begin{equation}  \label{eq:kernel35} \begin{aligned} &    T(\tau )m_\mp (x ,\tau )= R_\pm (\tau )e^{\pm 2{\bf i}\tau x }m_\pm (x,\tau )+   m_\pm (x,-\tau ).
\end{aligned}
\end{equation}
 From \cite{DT}  and from \cite{We2} we have the following lemma.
 \begin{lemma}
\label{lem:TRcoeff} For $V\in \mathcal{S}({\mathbf R} )$ we have
  $T, R_\pm   \in C^{\infty} ({\mathbf R}  )$. Moreover:
    \begin{align} &  \label{eq:TRcoeff0}
		 |T(  \tau )-1| +| R_\pm (  \tau )  |\le C \langle \tau  \rangle ^{ -1}  \text{ for  $C=C(\|   V    \|  _{L^{1,1}} )$;} \\&
		\label{eq:TRcoeff}
     |T(\tau )|^2+ |R_{\pm }(\tau )|^2=1; \\&   \label{eq:TRcoeff1}
		 |   \frac d {d\tau }
		T(  \tau ) | +|  \frac d {d\tau } R_\pm (  \tau )  |\le C    \text{ for  $C=C(\|   V    \|  _{L^{1,3}})$} .
    \end{align}
\end{lemma}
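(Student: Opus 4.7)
The plan follows the classical Deift-Trubowitz route: derive explicit integral representations of $T$ and $R_\pm$ from the Volterra integral equation satisfied by $m_\pm$, then feed in the estimates of Lemma \ref{lem:Jost}. The Jost function $f_+=e^{{\bf i}\tau x}m_+$ satisfies
\[ m_+(x,\tau) = 1 + \int_x^{\infty} \frac{e^{2{\bf i}\tau(y-x)}-1}{2{\bf i}\tau}\, V(y)\, m_+(y,\tau)\,dy, \]
and letting $x\to -\infty$ and matching with the scattering asymptotic $f_+(x,\tau)\sim T(\tau)^{-1}(e^{{\bf i}\tau x}+R_+(\tau)e^{-{\bf i}\tau x})$ produces
\[ \frac{1}{T(\tau)} = 1 - \frac{1}{2{\bf i}\tau}\int_{\mathbf R} V(y)\,m_+(y,\tau)\,dy, \qquad \frac{R_+(\tau)}{T(\tau)} = \frac{1}{2{\bf i}\tau}\int_{\mathbf R} e^{2{\bf i}\tau y} V(y)\,m_+(y,\tau)\,dy, \]
with a symmetric pair of formulas starting from $m_-$. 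Smoothness of $T,R_\pm$ away from $\tau=0$ is immediate from the smoothness of $m_\pm$ in Lemma \ref{lem:Jost}; smoothness at $\tau=0$ follows by rewriting the kernel as $(e^{2{\bf i}\tau(y-x)}-1)/(2{\bf i}\tau)=\int_0^1 e^{2{\bf i}\tau s(y-x)}(y-x)\,ds$, which removes the apparent pole.

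For \eqref{eq:TRcoeff0} with $|\tau|\ge 1$, I plug the bound \eqref{eq:kernel2} into the representations to get $|T(\tau)^{-1}-1|+|R_+(\tau)/T(\tau)|\lesssim |\tau|^{-1}\|V\|_{L^{1,1}}$, yielding the claimed $\langle\tau\rangle^{-1}$ decay; for $|\tau|\le 1$ the trivial bounds $|T|,|R_\pm|\le 1$ (from the unitarity below) suffice. Unitarity \eqref{eq:TRcoeff} is the classical Wronskian identity: from $W(f_+,f_-)=-2{\bf i}\tau/T(\tau)$ and the reality condition $\overline{f_\pm(x,\tau)}=f_\pm(x,-\tau)$ for real $V$, computing $W(f_\pm,\overline{f_\mp})$ using the asymptotics at both ends of ${\mathbf R}$ yields $|T|^2+|R_\pm|^2=1$.

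For the derivative bounds \eqref{eq:TRcoeff1} I differentiate the integral formulas. Three types of terms appear: (i) a $\tau^{-2}\int V m_+\,dy$ contribution, absorbed because this integral is $O(|\tau|)$ as $\tau\to 0$ and $O(1)$ as $|\tau|\to\infty$; (ii) $\tau^{-1}\int V\,\partial_\tau m_+\,dy$, bounded via \eqref{eq:tderm} by $\|V\|_{L^{1,2}}$; (iii) terms from differentiating $e^{2{\bf i}\tau y}$, which produce integrals of $y V\,\partial_\tau m_+$ and so demand the $L^{1,3}$ moment. The main obstacle throughout is uniformity across $\tau=0$: the naive formulas carry an explicit $1/\tau$ factor, so the cancellations, both in the Taylor-expanded Volterra kernel and in the vanishing of $\int V m_+\,dy$ as $\tau\to 0$, must be tracked quantitatively. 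This is precisely what forces the higher $L^{1,k}$ moment hypotheses on $V$.
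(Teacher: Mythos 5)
The paper itself offers no proof of Lemma \ref{lem:TRcoeff}: it simply cites Sect.~3 of \cite{DT} for \eqref{eq:TRcoeff}--\eqref{eq:TRcoeff1} and Theorem 2.3 of \cite{We2} for \eqref{eq:TRcoeff0}, so your plan of reconstructing the Deift--Trubowitz representations is the natural one, and your treatment of \eqref{eq:TRcoeff0} (the formulas for $1/T$ and $R_+/T$ plus \eqref{eq:kernel2}, combined with $|T|,|R_\pm|\le 1$ for $|\tau|\le 1$) and of \eqref{eq:TRcoeff} (Wronskian identities and reality of $V$) is sound.

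There is, however, a genuine gap in your argument for \eqref{eq:TRcoeff1} (and, implicitly, for smoothness at $\tau=0$): you rely on ``the vanishing of $\int V m_+\,dy$ as $\tau\to 0$'', and this is exactly backwards in the case the paper cares about. Since $1/T(\tau)=w(\tau)/(2{\bf i}\tau)$ with $w(\tau)=2{\bf i}\tau-\int_{\mathbf R} V(y)m_+(y,\tau)\,dy$, one has $\int V m_+(y,0)\,dy=-w(0)$; the generic case $T(0)=0$ of Hypothesis (H) is precisely the case $w(0)\neq 0$, i.e.\ $\int V m_+(y,0)\,dy\neq 0$, and the integral vanishes at $\tau=0$ only in the exceptional case. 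Hence in the generic case $1/T$ and $R_+/T$ really do blow up like $1/\tau$, your item (i) is not ``absorbed'', and differentiating your formulas produces $\tau^{-2}$ terms with no cancellation available from the numerator. The standard repair (and what \cite{DT} actually does) is to work with $T(\tau)=2{\bf i}\tau/w(\tau)$ and $R_+(\tau)=\big(\int e^{2{\bf i}\tau y}Vm_+\,dy\big)/w(\tau)$ (up to sign conventions), prove that $w$ and the numerator are smooth --- your kernel-rewriting does exactly this --- and then use that $w$ does not vanish near $\tau=0$ in the generic case (with a separate analysis when $w(0)=0$). Note that this also shows the constant controlling $|T'|,|R_\pm'|$ near $\tau=0$ involves a lower bound on $|w|$ near $0$, not only $\|V\|_{L^{1,3}}$; an argument like yours, which would yield a bound depending on weighted $L^1$ norms of $V$ alone, cannot be right as stated (take $V=g\phi$ with a fixed bump $\phi$ and $g\to 0$: every $\|V\|_{L^{1,k}}\to 0$ while $|T'(0)|\sim c/g\to\infty$). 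Away from $\tau=0$ and for large $\tau$ your items (ii)--(iii), using \eqref{eq:tderm} and the extra factor $y$ from $e^{2{\bf i}\tau y}$, are fine and do account for the $L^{1,3}$ moment.
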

In particular,  \eqref{eq:TRcoeff} and\eqref{eq:TRcoeff1}  follow from Sect.3  \cite{DT}
and  \eqref{eq:TRcoeff0}
follows from  Theorem 2.3  \cite{We2}.

\noindent Set now $\Psi  (x,\tau )=  T(  \tau ) f_+(x,\tau )$ for $\tau \ge 0$  and
$\Psi  (x,\tau )=  T(  -\tau ) f_-(x,-\tau )$ for $\tau \le 0$. Then the
distorted Fourier transform associated to $\triangle _V$ is defined by

\begin{equation}\label{eq:distFT}\begin{aligned} & F_Vf(\tau )=
 ( 2\pi )^{-\frac{1}{2}}\int  _{{\mathbf R}  }\Psi  (x,\tau ) f(x) dx
 \end{aligned}
\end{equation}
and we have   the inverse   formula

\begin{equation}\label{eq:distFTinv}\begin{aligned} &    f(x)=
 ( 2\pi )^{-\frac{1}{2}} \int _{{\mathbf R}  } \overline{ \Psi  (x,\tau )} F_Vf(\tau ) d\tau .
 \end{aligned}
\end{equation}
Our first application of this theory is the following lemma.

\begin{lemma}
  \label{lem:Sob} Let $V\in \mathcal{S}({\mathbf R}) $ and $\sigma (\triangle _V )=(-\infty ,0]$,
then for any $s> 1/2$
  there exists a fixed $C$ such that:
  \begin{equation} \label{eq:interp}  \begin{aligned} & \| f\|_{L^\infty_x}   \leq C \| f
\|^{1-\frac{1}{ 2s}}_{L^2_x} \| f\|^{ \frac{1}{2 s}}_{\dot H ^s_V}.\end{aligned}
\end{equation}
  \end{lemma}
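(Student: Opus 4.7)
The plan is to mimic the standard proof of the one-dimensional homogeneous Gagliardo--Nirenberg embedding $\dot H^s\cap L^2\hookrightarrow L^\infty$ (valid for $s>1/2$), replacing the usual Fourier transform by the distorted one $F_V$ defined in \eqref{eq:distFT}--\eqref{eq:distFTinv}. The key input is a uniform pointwise bound
\begin{equation*}
\sup_{(x,\tau)\in\mathbf{R}^2}|\Psi(x,\tau)|\le C(V)<\infty,
\end{equation*}
after which the argument is essentially identical to the free case.

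First I would establish the uniform bound on $\Psi$. For $x\ge 0$ (resp.\ $\tau\le 0$) I use the definition $\Psi(x,\tau)=T(\tau)f_+(x,\tau)=T(\tau)e^{\mathrm{i}\tau x}m_+(x,\tau)$ together with \eqref{eq:TRcoeff} (giving $|T|\le 1$) and Lemma \ref{lem:Jost} applied at $x^-=0$, which shows $|m_+(x,\tau)|\le 1+C_1\langle\tau\rangle^{-1}\int_0^{\infty}\langle y\rangle|V(y)|\,dy\le C$. For $x\le 0$ I instead use the symmetric companion of \eqref{eq:kernel35}, namely $T(\tau)f_+(x,\tau)=R_-(\tau)f_-(x,\tau)+f_-(x,-\tau)$, and apply Lemma \ref{lem:Jost} to $m_-$ at $x^+=0$ together with $|R_-|\le 1$. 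Both representations yield $|\Psi(x,\tau)|\le C$ uniformly; the case $\tau\le 0$ follows by the symmetric definition.

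Next I apply the inverse formula \eqref{eq:distFTinv} and the $L^\infty$ bound on $\Psi$ to obtain
\begin{equation*}
\|f\|_{L^\infty_x}\le (2\pi)^{-1/2}\|\Psi\|_{L^\infty_{x,\tau}}\|F_Vf\|_{L^1_\tau}\le C\|F_Vf\|_{L^1_\tau}.
\end{equation*}
Now I split the integration variable at a height $N>0$ and use Cauchy--Schwarz:
\begin{equation*}
\|F_Vf\|_{L^1_\tau}\le(2N)^{1/2}\|F_Vf\|_{L^2(|\tau|\le N)}+\Bigl(\int_{|\tau|>N}|\tau|^{-2s}d\tau\Bigr)^{1/2}\||\tau|^sF_Vf\|_{L^2(|\tau|>N)}.
\end{equation*}
By Plancherel for $F_V$ (which is an isometry because $\triangle_V$ is selfadjoint with absolutely continuous spectrum $(-\infty,0]$ under (H)) one has $\|F_Vf\|_{L^2}=\|f\|_{L^2}$ and $\||\tau|^sF_Vf\|_{L^2}=\|(-\triangle_V)^{s/2}f\|_{L^2}=\|f\|_{\dot H^s_V}$. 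The second integral converges precisely because $s>1/2$, and equals $C_sN^{1/2-s}$.

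Combining these gives $\|f\|_{L^\infty_x}\le C\bigl(N^{1/2}\|f\|_{L^2}+N^{1/2-s}\|f\|_{\dot H^s_V}\bigr)$; choosing $N=(\|f\|_{\dot H^s_V}/\|f\|_{L^2})^{1/s}$ optimizes the bound and yields \eqref{eq:interp}. The main (indeed essentially only) obstacle is the uniform bound on the generalized eigenfunction $\Psi$: the factor $m_+(x,\tau)$ grows like $\langle x\rangle$ as $x\to-\infty$, so one must switch to the representation in terms of $m_-$ on that half-line, exploiting the scattering relation \eqref{eq:kernel35} together with $|T|,|R_\pm|\le 1$. Everything else is just Cauchy--Schwarz and the optimization in $N$.
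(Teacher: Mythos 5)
Your proposal is correct and follows essentially the same route as the paper: a uniform bound on the generalized eigenfunctions $\Psi$ via the Jost function estimates of Lemma \ref{lem:Jost} and the scattering relation \eqref{eq:kernel35} (the paper fixes $x>0$ and switches representation according to the sign of $\tau$, while you fix the sign of $\tau$ and switch according to the sign of $x$ — a symmetric variant of the same step), followed by $\|f\|_{L^\infty_x}\lesssim\|F_Vf\|_{L^1}$, a frequency split with Cauchy--Schwarz and Plancherel for $F_V$, and optimization of the threshold.
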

\begin{proof}

We claim  that $\| f\|_{L^\infty_x}\le  c_0\| F_V {f}\|_{L^1_x}$
for a fixed $c_0=c_0(V)$. Assuming the claim we have:
\begin{equation}   \begin{aligned} & \| F_V {f}\|_{L^1_x}   \le
\| F_V {f}\|_{L^2( |\xi |\le \kappa  )}  \sqrt{2} \kappa ^{\frac{1}{2}} +
\| \, |\xi |^{s}  F_V {f}\|_{L^2( |\xi |\ge \kappa  )} \| \, |\xi |^{-s}   \|_{L^2( |\xi |\ge \kappa  )}
\\& \le \sqrt{2} \kappa ^{\frac{1}{2}} \,
\|  {f}\|_{L^2_x }   +C_s  \kappa ^{\frac{1}{2}- s} \|  {f}\|_{\dot H^s_V }    \text{ with } C_s:= \sqrt{\frac{2}{ { 2s-1}}}.\end{aligned}\nonumber
\end{equation}
 For $\kappa = \left ( 2^{-\frac{1}{2}} C_s\|  {f}\|_{\dot H^s _V} \right )^{\frac{1}{ s}}
\|  {f}\|_{L^2_x }^{-\frac{1}{ s}}$ the last two terms are equal
 and we get \eqref{eq:interp}.

 We now prove  $\| f\|_{L^\infty_x}\le  c_0\| F_V {f}\|_{L^1_x}$. By \eqref{eq:distFTinv} it suffices to prove $|\Psi (x,\tau )|\le C_0$ for fixed $C_0$. It is not restrictive to assume $x>0$. Then
  for $\tau \ge 0$ we get the bound by
  $\Psi  (x,\tau )=  T(  \tau ) f_+(x,\tau )$ and Lemmas \ref{lem:Jost}  and \ref{lem:TRcoeff}.
  Similarly for   $\tau < 0$  we get a similar bound  by

  \begin{equation*}   \begin{aligned} &   \Psi  (x,\tau )=  T(  -\tau ) f_-(x,-\tau )  = R_+ (-\tau )f_+ (x,-\tau )+   f_+ (x, \tau ).
\end{aligned}
\end{equation*}

\end{proof}

Consider   now a function $u(t,x)$.
By Lemma   \ref{lem:Sob}
we have  for $s>1/2$:
\begin{equation} \label{eq:decay1}  \begin{aligned}   \|  u(t, \cdot)\|_{L^\infty_x} &
\leq C\|M(-t)u(t, \cdot)\| ^{1-\frac{1}{2 s}}_{L^2_x}
\| M(-t)u(t, \cdot)\|^{ \frac{1}{2 s}}_{\dot H ^s_V} \\& =
\frac{C}{\sqrt{t}}\| u(t, \cdot)\| ^{1-\frac{1}{2 s}}_{L^2_x} \|
|J _V(t) |^su(t, \cdot)\|^{ \frac{1}{2 s}}_{L^2_x}.\end{aligned}
\end{equation}

\section{Proof of Theorem \ref{thm:dispersion}}

 Using the notation of  Proposition \ref{P1}
 we have
 the following equation
\begin{equation}\label{eq:NLSJu}
 ({\bf i} \partial _t  + \triangle _V )|J_V |^{s} u-{\bf i}t^{s-1}M(t) A(s)  M(-t)u+
\lambda |J_V |^{s} F=0  ,
\end{equation}
with $F= |u| ^{p-1} u $.
Let $0<s<2$. Then by Strichartz estimates which follow by \cite{We2},
there are   fixed $C_s'$ and $C$  s.t.

\begin{equation} \label{eq:Strich1}\begin{aligned} &
   \| |J_V |^{s} u \|_{L^\infty ((1,T),L^2_x)}\le C \| |J_V |^{s} (1) u \|_{ L^2_x}
   \\& +
   C_s'\| t^{s-1} A(s)M(-t)u\|_{L^{\frac{4}{3}} ((1,T),L^1_x)}   + C \| |J_V |^{s} F\|_{L^{1}( (1,T),L^2_x)}.
\end{aligned}\end{equation}
By combining Lemma \ref{lem:boundA}, \eqref{eq:decay1} and conservation of
charge we get for every $\delta>0$ a constant $M(\delta)$ such that
\begin{equation*} \begin{aligned} &
   \| t^{s-1} A(s)M(-t)u\|_{L_t ^{\frac{4}{3}} L^1_x}  \le C_s \| t^{s-1} \|  u\|_{  L^\infty _x} \|_{L_t ^{\frac{4}{3}}  } \\& \le D_s \| t^{s- \frac{3}{2}} \|_{L_t ^{\frac{4}{3}}  }
   \| u(1)\| ^{1-\frac{1}{2 s}}_{L^2_x} \| |J _V  |^su \|^{ \frac{1}{2 s}}_{L_t ^{\infty} L^2_x}\le M (\delta )\| u(1)\|  _{L^2_x} +\delta  \| |J _V  |^su \| _{L_t ^{\infty} L^2_x},
   \end{aligned}\end{equation*}
where we have considered $s<\frac{3}{4}$
so that $t^{s- \frac{3}{2}}\in L^{\frac{4}{3}} (1,\infty).$
Inserting this estimate in \eqref{eq:Strich1} we conclude
\begin{equation} \label{eq:Strich2}\begin{aligned} &
   \| |J_V |^{s} u \|_{L^\infty ((1,T),L^2_x)}\le C \| |J_V |^{s}  u (1)\|_{ L^2_x} \\& +C_s \| u(1)\|  _{L^2_x} + C_s \| |J_V |^{s} F\|_{L ^{1} ((1,T),L^2_x)}.
\end{aligned}\nonumber\end{equation}

We shall use the following result.

\begin{lemma}
\label{l:eqjsv}
We have
\begin{equation}\label{eqss1}
   \| |J _V|^s f \|_{L^2_x} \sim  \|J^s f \|_{L^2_x} \text{ for $0 \leq s < 1/2$ }.
\end{equation}
For $s \in (1/2,1)$ and any $\varepsilon \in (0,1/2)$ we have:
\begin{eqnarray}
\label{eq.injj} &\| |J _V|^s f \|_{L^2_x} \leq C t^{s+\varepsilon - \frac 1 2}   \left( \| |J|^{\frac 1 2-\varepsilon}f\|_{L^2_x} + \| |J  |^s f \|_{L^2_x}\right);  \\
  \label{eq.injj1} &\| |J |^s f \|_{L^2_x} \leq C t^{s+\varepsilon - \frac 1 2}   \left( \| |J_V|^{\frac 1 2-\varepsilon}f\|_{L^2_x} + \| |J _V |^s f \|_{L^2_x}\right) .
\end{eqnarray}

\end{lemma}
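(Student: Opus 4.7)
First I would strip the dynamical factors. Since $M(\pm t)$ is a unitary multiplier on $L^2_x$, the substitution $g:=M(-t)f$ yields
\[
\||J_V(t)|^s f\|_{L^2_x}=t^s\|(-\triangle_V)^{s/2}g\|_{L^2_x},\qquad \||J(t)|^s f\|_{L^2_x}=t^s\|(-\triangle)^{s/2}g\|_{L^2_x}.
\]
Therefore \eqref{eqss1} is equivalent to the $t$-independent norm equivalence $\|(-\triangle_V)^{s/2}g\|_{L^2}\sim\|(-\triangle)^{s/2}g\|_{L^2}$ for $0\le s<1/2$, and \eqref{eq.injj}--\eqref{eq.injj1}, after dividing through by $t^s$, become $t$-dependent frequency-split estimates between the same two norms. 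The lemma is thereby reduced to a pair of purely spectral comparisons of $-\triangle_V$ and $-\triangle$.

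Next I would set up a joint Paley--Littlewood decomposition. Using the distorted Fourier transform $F_V$ of Sect.\,\ref{sec:spectral} together with the ordinary $F$, the two norms become $\||\tau|^s F_V g\|_{L^2_\tau}$ and $\||\tau|^s F g\|_{L^2_\tau}$. Fix a dyadic partition of unity $\{\chi_j\}_{j\in\mathbb Z}$ on $\{|\tau|>0\}$ and set $P_j:=F^{-1}\chi_jF$ and $P_j^V:=F_V^{-1}\chi_jF_V$. Standard square-function identities give $\|g\|_{\dot{\mathcal H}^s}^2\sim\sum_j2^{2js}\|P_jg\|_{L^2}^2$ and similarly for $\dot{\mathcal H}^s_V$. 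The core task becomes to estimate $\|P_j^V-P_j\|_{L^2\to L^2}$, or more generally the cross norms $\|P_j^V P_k\|_{L^2\to L^2}$.

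Kernel comparison is the heart of the argument. The Schwartz kernel of $P_j^V$ is $\int\chi_j(\tau)\Psi(x,\tau)\overline{\Psi(y,\tau)}\,d\tau$, to be matched with $\int\chi_j(\tau)e^{i\tau(x-y)}\,d\tau$. Writing $\Psi(x,\tau)=T(\tau)e^{i\tau x}m_+(x,\tau)$ for $\tau\ge 0$ (and its $\tau<0$ analogue via \eqref{eq:kernel35}), the decay estimates $|T(\tau)-1|,\,|m_\pm(x,\tau)-1|\lesssim\langle\tau\rangle^{-1}$ from Lemmas \ref{lem:Jost}--\ref{lem:TRcoeff} yield $\|P_j^V-P_j\|_{L^2\to L^2}\lesssim 2^{-j}$ at high frequency ($j\ge 0$). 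At low frequency ($j<0$) the crucial input is the genericity $T(0)=0$, which forces $\Psi(x,0)\equiv 0$; combined with the Lipschitz-in-$\tau$ control of $T$ and $m_\pm$ from Lemmas \ref{lem:Jost}--\ref{lem:TRcoeff}, this yields $|\Psi(x,\tau)|\lesssim|\tau|(1+|x|)$ for small $\tau$ and hence an extra gain $2^j$ in the block comparison. (The non-generic alternative $T(0)=1$ would be treated analogously, since then $\Psi(x,0)=m_+(x,0)$ matches the constant plane wave of $-\triangle$.)

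Summing the block estimates with weights $2^{2js}$ gives \eqref{eqss1} for $0\le s<1/2$, the low-frequency losses being absorbed by the convergent geometric sum $\sum_{j<0}2^{2js}$. For $s\in(1/2,1)$ I would split the spectral variable at a $t$-dependent dyadic threshold $2^{j_0}\sim t^{-1}$: on $|\tau|\le 2^{j_0}$ I would apply the already-proved equivalence at the lower regularity $s'=1/2-\epsilon$ to bound the low-frequency $\dot{\mathcal H}^s_V$ contribution by $\|(-\triangle)^{(1/2-\epsilon)/2}g\|_{L^2}$ up to a factor $2^{j_0(s-1/2+\epsilon)}$, while on $|\tau|>2^{j_0}$ the high-frequency comparison gives a direct bound by $\|(-\triangle)^{s/2}g\|_{L^2}$; restoring the $t^s$ prefactor from the first paragraph reproduces the loss $t^{s+\epsilon-1/2}$ of \eqref{eq.injj}, and \eqref{eq.injj1} follows symmetrically by interchanging $F$ and $F_V$. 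The main obstacle is the low-frequency block comparison: neither $T(\tau)$ nor $m_\pm(\cdot,\tau)-1$ is small near $\tau=0$, so perturbative kernel estimates fail; one must instead exploit the full structural identity \eqref{eq:kernel35} together with the explicit zero-energy value of $T$ provided by (H), which is precisely where the dichotomy $T(0)\in\{0,1\}$ highlighted in the introduction enters decisively.
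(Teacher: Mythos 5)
Your reduction via the unitarity of $M(\pm t)$ and your plan for \eqref{eqss1} follow the paper's own route (distorted Fourier transform, Paley--Littlewood blocks, Jost-function bounds, genericity at $\tau=0$), but the $s\in(1/2,1)$ half contains a genuine gap. After stripping the $t$-powers, \eqref{eq.injj} amounts to the $t$-independent inequality $\|(-\triangle_V)^{s/2}g\|_{L^2}\lesssim \|(-\triangle)^{(1/2-\varepsilon)/2}g\|_{L^2}+\|(-\triangle)^{s/2}g\|_{L^2}$. In your split of the distorted spectral variable at $2^{j_0}\sim t^{-1}$, the low-frequency piece is fine, but the step ``on $|\tau|>2^{j_0}$ the high-frequency comparison gives a direct bound by $\|(-\triangle)^{s/2}g\|$'' is precisely the unproved hard point. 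The only comparison available from the Jost-function analysis is the almost-orthogonality gain $2^{-|j-k|}$ of \eqref{eq:quasidiag}, and for $s>1/2$ the weighted sum over the regime where the free frequency $2^{k}$ lies far below the distorted frequency $2^{j}$ diverges --- this is exactly why \eqref{eqss1} and Corollary \ref{l:ineperteq} are restricted to $s<1/2$. Cutting off low \emph{distorted} frequencies does not remove that regime: a function carried by low free frequencies can still have high distorted-frequency content, and that contribution is not controlled by $\|(-\triangle)^{(1/2-\varepsilon)/2}g\|$ with your scheme. The paper closes this with an ingredient you never invoke: $\|\sqrt{-\triangle_V}f\|^2_{L^2}\le\|\sqrt{-\triangle}f\|^2_{L^2}+\|Vf^2\|_{L^1}$, the one-dimensional bound $\|Vf^2\|_{L^1}\lesssim\|(-\triangle)^{\frac14-\frac\delta2}f\|^2_{L^2}$, and interpolation with \eqref{eq:normeq}; some such extra mechanism is indispensable, and \eqref{eq.injj1} likewise needs it (plus \eqref{eq:normeq}) rather than a ``symmetric interchange'' of $F$ and $F_V$.

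For \eqref{eqss1} your architecture is the right one, but two of the asserted block estimates do not hold as stated. First, $\|P_j^V-P_j\|_{L^2\to L^2}\lesssim 2^{-j}$ cannot be read off from $|T(\tau)-1|,|m_\pm-1|\lesssim\langle\tau\rangle^{-1}$: by \eqref{eq:kernel2} these errors grow like $\langle x\rangle$ in space, so no Schur or Hilbert--Schmidt bound on the full projection difference is available; the paper always works with the quadratic form paired against $Vf$, so the Schwartz decay of $V$ absorbs the $\langle x\rangle^{3}$ weights, and the half-line integrals are handled through the Plemelj/Hilbert-transform identity \eqref{eq:htran} together with the support cancellation \eqref{eq:cancel}. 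Second, at low frequency the operative cancellation is not $\Psi(x,0)\equiv0$ but the vanishing of $T(0)-1-R_+(0)$ (via \eqref{eq:kernel35} and \eqref{eq:TRcoeff}), which holds in both cases $T(0)=0$ and $T(0)=1$; your mechanism only makes the distorted projections small on low frequencies and by itself cannot yield the reverse inequality ($\gtrsim$ in \eqref{eqss1}), nor does it explain the non-generic case. So the plan needs to be reshaped into the quadratic-form-with-$V$ framework of Lemmas \ref{l:diadhigh}--\ref{l:diadlow} before the summation argument of Corollary \ref{l:ineperteq} can be run.
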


\begin{proof} \eqref{eqss1} is a simple consequence of  Corollary \ref{l:ineperteq} in the next section  which states
\begin{equation}\label{eq:normeq}  \|(-\triangle)^{\frac s2} f \|_{L^2_x} \sim
\|(-\triangle+V)^{\frac s2} f \|_{L^2_x}
\text{ for $0 < s < 1/2 $.}\end{equation}

\noindent To prove \eqref{eq.injj} (resp. \eqref{eq.injj1})
we use
\begin{equation*} \begin{aligned} &
\| \sqrt{-\triangle _V} f \|^2_{L^2_x}   \leq
 \| \sqrt{-\triangle}  f \|^2_{L^2_x} + \|  V f^2\| _{L^1_x}\\&
\|  V f^2\| _{L^1_x} \le  \|  V \| _{L^{p'}_x}  \| f \| _{L^{2p}_x} ^{2}
 \le C \|(-\triangle)^{\frac{1}{4} -\frac{\delta}{2}} f \|^2_{L^2_x}
\text{ for } \frac{1}{2p} = \frac{1}{2 } -
(\frac{1}{2}- \delta ) = \delta \end{aligned}
\end{equation*}
(resp. the inequalities with $ \triangle _V$ and $\triangle$ interchanged:
this will use also \eqref{eq:normeq}).
 We thus obtain
$$ \|\sqrt{-\triangle _V} f \|_{L^2_x} \leq C \|(-\triangle)^{\frac{1}{4} -\frac{\delta}{2} } \left( 1 + (-\triangle)^{\frac{1}{4} +\frac{\delta}{2} }\right) f \|_{L^2_x}$$
(resp. the inequality with $ \triangle _V$ and $\triangle$ interchanged).
Interpolation   with \eqref{eq:normeq}  for $s= {1}/{2} - {\delta} $
 yields
$$ \|(-\triangle _V)^{\frac{s}{2} } f \|_{L^2_x} \leq C \|(-\triangle)^{\frac{1}{4} -\frac{\delta}{2}} \left( 1 + (-\triangle )^{ \frac{s}{2}- \frac{1}{4}+\frac{\delta}{2} }\right) f \|_{L^2_x}$$
$$ \leq C \left( \|(-\triangle)^{\frac{1}{4} -\frac{\delta}{2}} f \|_{L^2_x} + \| (-\triangle)^{\frac s2} f \|_{L^2_x}  \right)$$
(resp. the inequality with $ \triangle _V$ and $\triangle$ interchanged).
Multiplying this estimate by $t^s$
and using again the fact that $M(t)$ is $L^2_x$ bounded operator, we see that
 $$ \| |J_V|^s f \|_{L^2_x} \leq C \left( t^{s-\frac 12+\delta} \| |J|^{\frac 12-\delta}
f \|_{L^2_x} + \| |J|^{s} f \|_{L^2_x}   \right)$$
 and  for $\varepsilon = \delta  $  we get \eqref{eq.injj}
 (resp.
  \eqref{eq.injj1}).

\end{proof}
\noindent By Lemma \ref{l:eqjsv} we get
\begin{equation} \label{eq:Strich3}\begin{aligned} &
   \| |J_V |^{s} u \|_{L^\infty ((1,T),L^2_x)}\le C_s \|     u(1) \|_{ \Sigma_s}  + C_s \| |J_V |^{s} F\|_{L ^{1}( (1,T),L^2_x)},
\end{aligned}\nonumber\end{equation}
since
$$  \| |J |^{s}  u (1)\|_{ L^2_x} \leq C \|u(1)\|_{\Sigma_s}.$$
 If we can show that for a fixed $C$ for all $T$
\begin{equation}\label{eq:decay2}\begin{aligned} &\| |J_V |^{s} u \|_{L^\infty ((1,T),L^2_x)}\le C
\|u(1)\|_{\Sigma_s},   \end{aligned}
\end{equation}
then by \eqref{eq:decay1} this will yield \eqref{eq:dispersion}.
Then scattering \eqref{eq:scattering} will follow
from  \eqref{eq:dispersion} by a standard argument which we do not repeat.
\noindent  By combining Lemma \ref{l:eqjsv} with Lemma 2.3 in \cite{HN}, which states that $$\| |J|^\gamma
(|u| ^{p-1} u)\|_{ L^{2}_x  }\le C \| u\|_{ L^{\infty}_x  }^{p-1}\| |J|^\gamma   u\|_{ L^{2}_x  } \text{ for $0\le \gamma <2$ and $p\ge 3$}$$ we have
\begin{equation*}\label{mj2in3}\begin{aligned} &
   \| |J_V |^{s}  (|u| ^{p-1} u)\|_{L ^{1}  ((1,t),L^2_x) }\le
 \\&   C   \left \|  \langle t'\rangle^{s+\varepsilon - \frac 12} ( \| |J|^{\frac 12-\varepsilon}(|u| ^{p-1} u)\|_{   L^{2}_x } + \|J^s (|u| ^{p-1} u)\|_{ L^{2}_x }) \right \| _{ L^1(   1,t)} \\&
 \le C '     \left \|  \langle t'\rangle^{s+\varepsilon - \frac 12} \| u\|_{ L^{\infty}_x }^{p-1}( \| |J |^{\frac 12-\varepsilon}  u\|_{      L^{2}_x } + \| |J|^s   u\|_{ L^{2}_x }) \right \| _{ L^1 (   1,t)}  \\&
 \le C '     \left \|  \langle t'\rangle^{s+\varepsilon - \frac 12} \| u\|_{ L^{\infty}_x }^{p-1}( \| |J_V|^{\frac 12-\varepsilon}  u\|_{      L^{2}_x } + \| |J|^s   u\|_{ L^{2}_x }) \right \| _{ L^1(   1,t)}
\end{aligned}
\end{equation*}
Again by Lemma \ref{l:eqjsv} we can continue the estimate as follows
\begin{equation*}\label{mj2in3bis}\begin{aligned} &
... \le C '     \left \|  \langle t'\rangle^{2s+2\varepsilon - 1} \| u\|_{ L^{\infty}_x }^{p-1}( \| |J_V|^{\frac 12-\varepsilon}  u\|_{      L^{2}_x } + \| |J_V|^s   u\|_{ L^{2}_x }) \right \| _{ L^1(   1,t)}\le\\& C'\int _1^t  \langle t'\rangle
^{2(s+\varepsilon)   -\frac{p +1}{2}}  (\| u\|_{ L^{2}_x }^{2s-1} \| |J_V|^s    u\|_{ L^{2}_x })^{\frac{p-1}{2s}}( \| |J_V|^{\frac 1 2-\varepsilon}   u\|_{      L^{2}_x } + \| |J_V|^s    u\|_{ L^{2}_x })  dt'\end{aligned}
\end{equation*}
where in the last line we used \eqref{eq:decay1}.

 \noindent
Since $p>3$ we can choose $s>1/2 $ and $\varepsilon >0$ such that $  \frac{p+1 }{2} -2s-2\varepsilon >1$. Then
\begin{equation*} \begin{aligned} &
   \| |J_V |^{s}  (|u| ^{p-1} u)\|_{L ^{1}  _t L^2_x }\\& \le
  C_s  \| u (1)\|_{ L^{2}_x }^{(p-1) \frac{2s-1}{2s}} \| |J_V|^s    u\|_{ L ^{\infty}  _t L^2_x }^{\frac{p-1}{2s}}( \| |J_V|^{\frac 1 2-\varepsilon}   u\|_{     L ^{\infty} _t L^2_x } + \| |J_V|^s    u\|_{L ^{\infty}  _t L^2_x })  \end{aligned}
\end{equation*}
 on any  interval $(1,t)$
 a constant $C_s$ independent of $t$. Notice that the norm $\| |J_V|^{\frac 1 2-\varepsilon}   u\|_{     L ^{\infty} _t L^2_x }$ can be bounded in terms of the other norms  using interpolation, hence the proof of \eqref{eq:decay2}
follows by a standard continuity argument, provided that we fix the constant $\epsilon _0>0$ in
the statement  of Theorem \ref{thm:dispersion} sufficiently small.

\section{Equivalence of homogeneous Sobolev norms}

Along this section the functions $m_\pm (x, \tau)$, $f_\pm(x, \tau)$, $T(\tau)$ and $R_\pm (\tau)$
are the ones defined in Sect. \ref{sec:spectral}. Also the norm $\|V\|_{L^{p,q}}$
is the one defined in
the same section.
We consider for an appropriate cutoff  $\varphi \in C^\infty _0({\mathbf R} _+, [0,1])$
a Paley-Littlewood partition of unity
$$ 1 = \sum_{j \in {\mathbf Z}}  \varphi \left(t 2^{- j}\right) , t > 0.$$
Then for any $s\in {\mathbf R} $ we have

\begin{equation}\label{eq:DistSob 1} \begin{aligned}
   \|(-\triangle _V)^{\frac{s}{2}  } f\|^2_{L^2}
&\sim \sum_{j \in {\mathbf Z}} 2^{2 js} \langle \varphi \left( 2^{-j}\sqrt{-\triangle _V}
\right)  f, f \rangle_{L^2_x}\\& \sim \sum_{j \in {\mathbf Z}} 2^{2js} \| \varphi \left(
2^{-j} \sqrt{-\triangle _V} \right)  f\|^2_{L^2_x}.
\end{aligned}\nonumber\end{equation}
 We have the following result.

\begin{lemma}
\label{l:diadhigh} Let   $V$ be a real valued Schwartz function such that  $\sigma (\triangle _V )=(-\infty ,0]$ and $T(0)$ is either equal to 0
or to 1.  Then for  any pair of integer numbers $j,k \in {\mathbf Z} $ with $k \leq j$
 and for any  $f\in \mathcal{S}( \mathbb{{R}}),$ such that
 \begin{equation}\label{eq:suppf}
 {\rm supp} \widehat{f}(\xi) \subseteq \{  |\xi| \sim 2^k \},
 \end{equation}
  the following inequality holds for $C_V =C(\| V \| _{L^{1,3}})$:

\begin{equation}\label{eq:quasidiag}
\langle \varphi \left({2 ^{-j}}  {\sqrt{-\triangle _V}} \right )   f , f \rangle_{L^2_x}
\leq C_V  2^{-|k-j|} \|f\|^2_{L^2_x}.
\end{equation}

\end{lemma}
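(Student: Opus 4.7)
The plan is to pass from the operator side to the spectral side via the distorted Fourier transform $F_V$. Since under the hypothesis $\sigma(\triangle_V)=(-\infty,0]$ the map $F_V$ is unitary on $L^2$, one has
\begin{equation*}
\langle \varphi(2^{-j}\sqrt{-\triangle_V})f,f\rangle_{L^2_x}
=\int_{\mathbf R} \varphi(2^{-j}|\tau|)\,|F_V f(\tau)|^2\,d\tau,
\end{equation*}
so the task reduces to controlling $|F_V f(\tau)|$ on the spectral annulus $|\tau|\sim 2^j$ when $\widehat f$ is supported at $|\xi|\sim 2^k$ with $k\le j$.

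If $|j-k|\le C_0$ for a fixed $C_0$, the estimate is trivial, since $\|\varphi(2^{-j}\sqrt{-\triangle_V})\|_{L^2\to L^2}\le\|\varphi\|_{L^\infty}\lesssim 1$ and $2^{-|k-j|}\sim 1$. So the genuine work lies in the regime $j\ge k+C_0$. For $\tau>0$ I would use $\Psi(x,\tau)=T(\tau)\,e^{{\bf i}\tau x}m_+(x,\tau)$ from \eqref{eq:distFT} to split
\begin{equation*}
F_V f(\tau)= T(\tau)\,\widehat f(\tau)+R(\tau),\qquad R(\tau):=\frac{T(\tau)}{\sqrt{2\pi}}\int_{\mathbf R} e^{{\bf i}\tau x}\bigl(m_+(x,\tau)-1\bigr)\,f(x)\,dx,
\end{equation*}
and treat $\tau<0$ symmetrically via $\Psi(x,\tau)=T(-\tau)f_-(x,-\tau)$. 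Because $j\ge k+C_0$, the supports of $\widehat f$ and $\varphi(2^{-j}|\cdot|)$ are disjoint and the first term drops out.

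For $R(\tau)$, I would plug in the Volterra representation
\begin{equation*}
m_+(x,\tau)-1=\int_x^{+\infty}\frac{e^{2{\bf i}\tau(y-x)}-1}{2{\bf i}\tau}\,V(y)\,m_+(y,\tau)\,dy,
\end{equation*}
which writes $R(\tau)$ as a sum of two oscillatory bilinear expressions with phases $e^{{\bf i}\tau x}$ and $e^{-{\bf i}\tau x}$ paired against $f$. Inserting the Fourier representation of $f$ turns these into integrals against $\widehat f(\xi)$ at $|\xi|\sim 2^k$, with a total frequency mismatch of order $2^j$. An integration by parts in $x$, exploiting the Jost bounds of Lemma \ref{lem:Jost} and the $\langle\tau\rangle^{-1}$ decay of $T(\tau)-1$ from Lemma \ref{lem:TRcoeff}, together with the Schwartz decay of $V$, produces the sought gain of $2^{-(j-k)}$ after squaring and integrating over $|\tau|\sim 2^j$.

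The main obstacle I foresee is twofold. First, Lemma \ref{lem:Jost} gives only one-sided spatial decay of $m_+(\cdot,\tau)-1$, so a naive integration by parts in $x$ leaves boundary contributions at $x=-\infty$; these must be absorbed through the Fourier localization of $f$ rather than through pure spatial decay, which forces one to work with the bilinear pairing $\int \widehat f(\xi)\,\widehat{h_+(\cdot,\tau)}(\tau-\xi)\,d\xi$ and integrate by parts at the level of its kernel. Second, one needs the constant $C_V$ uniform in $j$, in particular as $j\to -\infty$; here the hypothesis $T(0)\in\{0,1\}$ is essential, since it ensures that the distorted Fourier basis $\Psi(x,\tau)$ depends smoothly on $\tau$ through the origin, so that the constants in Lemmas \ref{lem:Jost}--\ref{lem:TRcoeff} propagate to a fixed $C_V=C(\|V\|_{L^{1,3}})$ across all dyadic scales.
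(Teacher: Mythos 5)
Your reduction via unitarity of $F_V$ and the splitting $F_Vf(\tau)=T(\tau)\widehat f(\tau)+R(\tau)$ is a legitimate alternative to the paper's route (the paper instead writes $\varphi(\tau)=\tau^2\psi(\tau)$ so that the dangerous term carries an explicit factor $V$, and then represents $\psi(2^{-j}\sqrt{-\triangle_V})$ through the limiting absorption principle). In the generic case $T(0)=0$ your scheme can indeed be closed: after the Volterra substitution the dangerous factor is $T(\tau)/(2{\bf i}\tau)$, which is bounded uniformly precisely because $T(0)=0$ and $T\in C^1$, and the half--line integrals $\int_{x<y}e^{\mp{\bf i}\tau x}f\,dx$ are controlled, not by a literal integration by parts (the sharp cutoff makes that awkward, as you note), but by the Sokhotskyi--Plemelj identity: the $\delta$--part is proportional to $\widehat f(\mp\tau)$ and vanishes on $|\tau|\sim 2^j$, while the principal--value part is $\lesssim 2^{-j}\|\widehat f\|_{L^1}\lesssim 2^{-j}2^{k/2}\|f\|_{L^2}$ since $|\tau-\xi|\sim 2^j$; squaring and integrating over $|\tau|\sim 2^j$ then yields exactly $2^{k-j}\|f\|^2_{L^2}$.

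The genuine gap is the case $T(0)=1$, which the lemma also claims. There $|T(\tau)|/|\tau|\sim 2^{-j}$ on the block $|\tau|\sim 2^j$ with $j\to-\infty$, and your estimates give at best $\int_{|\tau|\sim 2^j}|R|^2\,d\tau\lesssim 2^{k-3j}\|f\|^2_{L^2}$, which is not $\lesssim 2^{k-j}\|f\|^2_{L^2}$ for $j<0$; so uniformity of $C_V$ over the low dyadic blocks is lost. Moreover, your stated reason for why $T(0)\in\{0,1\}$ helps --- smooth dependence of $\Psi(x,\tau)$ on $\tau$ so that the constants of Lemmas \ref{lem:Jost}--\ref{lem:TRcoeff} propagate --- is not the actual mechanism: those lemmas hold for every Schwartz $V$, with no condition on $T(0)$. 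What the hypothesis really buys (and what the paper uses in Lemma \ref{lem:split2}) is the zero--energy cancellation $T(0)-1-R_+(0)=0$ of \eqref{eq:essential}, obtained from \eqref{eq:kernel35} and \eqref{eq:TRcoeff}, which together with \eqref{eq:TRcoeff1} gives $|T(\tau)-1-R_+(-\tau)|\lesssim|\tau|/\langle\tau\rangle^2$ and supplies the extra factor $|\tau|\sim 2^j$ needed at small frequencies (plus the companion bound $|e^{-{\bf i}\tau x}m_+(x,-\tau)-e^{{\bf i}\tau x}m_+(x,\tau)|\lesssim\langle x\rangle^2|\tau|$ from \eqref{eq:kernel2} and \eqref{eq:tderm}). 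In your framework the analogous cancellation would have to be extracted from the near--cancellation of the two half--line integrals $e^{2{\bf i}\tau y}\int_{x<y}e^{-{\bf i}\tau x}f\,dx-\int_{x<y}e^{{\bf i}\tau x}f\,dx$ at $\tau=0$, and the naive way of doing this (differentiating in $\tau$) brings down a factor $x$ against $f$ that is not controlled by $\|f\|_{L^2}$; so some version of the reflection/transmission identity \eqref{ker35bis} and of the combination $T-1-R_+$ must be reinstated. Until that step is supplied, your argument proves the lemma only for $T(0)=0$, or for $T(0)=1$ restricted to $j\geq 0$.
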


\begin{proof}
For $\varphi (|\tau |):= \tau ^2\psi (|\tau |)$ we have

\begin{equation*}    \begin{aligned} & \langle \varphi
\left( {2 ^{-j}}  {\sqrt{-\triangle _V}} \right)
f , f \rangle_{L^2_x} =
 A_j(f) + B_j(f) \\&  A_j(f)  := -2^{-2j} \langle \psi
\left( {2 ^{-j}}  {\sqrt{-\triangle _V}} \right)   f , \partial_x^2f   \rangle_{L^2_x}
   \\&  B_j (f) :=2^{-2j} \langle \psi \left( {2 ^{-j}}
{\sqrt{-\triangle _V}}\right)   f , V f   \rangle_{L^2_x} .\end{aligned}
\end{equation*}
It is  straightforward that

 \begin{equation}  \label{eq:aj1} \begin{aligned}
 & |A_j(f)| = 2^{-2j} | \langle \psi  \left( {2 ^{-j}}
{\sqrt{-\triangle _V}} \right)   f , \partial_x^2f \rangle_{L^2_x}|\\& \le 2^{-2j}
 \|  \psi  \left( {2 ^{-j}}  {\sqrt{-\triangle _V}} \right)   f \| _{L^2_x}
 \|   \partial_x^2f \| _{L^2_x} \le C 2^{2k-2j}\|    f \| _{L^2_x}^2.
     \end{aligned} \end{equation}
Notice that this constant $C$ depends on the cutoff $\varphi$
but not on $V$. This follows from the fact that the distorted  Fourier transform
\eqref{eq:distFT} is an isometry.
Next lemma in conjunction with \eqref{eq:aj1} will complete the proof
of Lemma \ref{l:diadhigh}.
\begin{lemma}
\label{l:diadhigh2} Assume the hypothesis of Lemma \ref{l:diadhigh}.
Then there exists a fixed  for $C  =C(\| V \| _{L^{1,3}})$ such that
 $ |B_j(f)| \leq C 2^{-|k-j|} \|f\|^2_{L^2_x} $.
\end{lemma}\begin{proof}
The first step in the proof is the following representation formula:

\begin{lemma}
\label{lem:repr}   We have

\begin{equation}\label{eq:b1}  \begin{aligned} &
   ( \psi  (2^{- j}\sqrt{ -\triangle _V}) f) (x)  =  - \frac  1 {2 \pi}  \int _{\mathbf R}    d\tau \psi (2^{- j}  \tau  )   \\&  \times\big [ T( \tau ) m_+(x, \tau )   \int _{y<x}m_-(y, \tau )   e^{{\bf i}\tau (x-y)} f(y)dy   \\&  +   T(- \tau ) m_-(x, -\tau )  \int _{y>x} m_+(y, -\tau )  e^{{\bf i}\tau (x-y)}  f(y)dy\big ] .
\end{aligned}
\end{equation}
\end{lemma}\begin{proof}
We recall the Limiting Absorption Principle
 \begin{equation} \label{eq:absprin1}  \begin{aligned}
 &    g( -\triangle _V) (x,y)=  \int _0^\infty    g( \lambda ) E _{a.c.}(d\lambda ) (x,y)   \\&    E _{a.c.}(d\lambda ) (x,y )=\frac  1 {2\pi \im}\left [ R^+_{ -\triangle _V}(x,y, \lambda  )- R^-_{ -\triangle _V}(x,y, \lambda  )\right ] d\lambda
\end{aligned}
\end{equation}
where  for $ \lambda  >0$ and $x<y$ (for  $x>y$ exchange $x$ and $y$ in the r.h.s.)
\begin{equation} \label{eq:absprin2}  \begin{aligned}
 &      R^\pm _{ -\triangle _V}(x,y, \lambda    )  =\frac{ f_- (x, \pm \sqrt{ \lambda   })   f_+ (y, \pm \sqrt{ \lambda   })   }
{w( \pm \sqrt{ \lambda    } )}
\end{aligned}
\end{equation}
for  the Wronskian
\begin{equation} \label{eq:wronskian}  \begin{aligned}
 &      w( {\tau}):= (\partial _xf_{
+ }) (x, {\tau} )  f_{
- } (x, {\tau}) - f_{
+ } (x, {\tau} )  \partial _xf_{
- } (x, {\tau})  .
\end{aligned}
\end{equation}
Then  for $x<y$  (for  $x>y$ exchange $x$ and $y$ in the r.h.s.)
\begin{equation*} \label{eq:absprin3}  \begin{aligned}
      g( -\triangle _V) (x,y)&=      \int _0^\infty   \tau    g( \tau    ^2)
\left [ \frac{ f_- (x,     \tau   )   f_+ (y,    \tau    )   }
{w(     \tau    )} -\frac{ f_- (x, -  \tau )   f_+ (y, -  \tau )   }
{w( -   \tau     )} \right ] \frac{d\tau  }{ \pi \im} \\& =  - \frac  1 {2 \pi}    \int _\R   T(\tau    ) g( \tau    ^2)
   f_- (x,     \tau  )   f_+ (y,    \tau    )     d\tau  ,
\end{aligned}
\end{equation*}
where we used the formula  $\frac 1{T( \tau    )}=\frac{w(    {\tau   } )} {2{\bf i}\tau  }$, see p. 144
\cite{DT}.    Therefore,  making   also a change of variable,
\begin{equation} \label{eq:absprin4}  \begin{aligned}
    g( -\triangle _V) f(x) =          - \frac  1 {2 \pi}    \int _{\mathbf R}   d\tau   \   g(& \tau     ^2)
 \big [ T( \tau    )  f_+ (x,    {\tau   }) \int _{-\infty } ^x    f_- (y,   {\tau   })   f(y) dy \\& +
T(- \tau    )  f_- (x,    -{\tau   }) \int _{x} ^\infty    f_+ (y,  - {\tau   })   f(y) dy  \big ] .
\end{aligned}
\end{equation}
For $g( \lambda     )= \psi (2^{- j}  \sqrt{ \lambda    } )$  and $ f_\pm  (x,   {\xi  })=e^{\pm {\bf i}x \xi }m_\pm  (x,   {\xi  }) $ we get Lemma  \ref{lem:repr}.
\end{proof}

We continue with the proof of Lemma \ref{l:diadhigh2}   by writing
   $$ B_j(f) =  B_j^{(1)}(f)+ B_j^{(2)}(f)$$  with

\begin{equation}\label{eq:b11}  \begin{aligned}
   &  B_j^{(1)}(f)  :=  - \frac  1 {2 \pi}2^{- 2j} \int _{{\mathbf R}}  dx
V(x) \overline{{f}(x) }  \\& \times\int _{\mathbf R}    d\tau \psi(2^{- j}  \tau  ) \big [ T( \tau ) m_+(x, \tau )   \int _{y<x}(m_-(y, \tau )-1)   e^{{\bf i}\tau (x-y)} f(y)dy  \\& +   T(- \tau ) m_-(x, -\tau )  \int _{y>x} (m_+(y, -\tau ) -1)
e^{{\bf i}\tau (x-y)}  f(y)dy\big ] \,
\end{aligned}
\end{equation}
and
\begin{equation}\label{eq:b12}  \begin{aligned}
    B_j^{(2)}(f) &:=  - \frac  1 {2 \pi}2^{- 2j} \int _{{\mathbf R}}  dx  V(x) \overline{{f}(x) }  \\& \times\int _{\mathbf R}    d\tau \psi(2^{- j}  \tau  ) \big [ T( \tau ) m_+(x, \tau )   \int _{y<x}    e^{{\bf i}\tau (x-y)} f(y)dy  \\& +   T(- \tau ) m_-(x, -\tau )  \int _{y>x}   e^{{\bf i}\tau (x-y)}  f(y)dy\big ] \ .
\end{aligned}
\end{equation}

\begin{lemma}
\label{lem:split1} Assume that $f$, $j$ and $k$ are as in Lemma \ref{l:diadhigh}.
Let  $V$ be a real valued Schwartz function such that  $\sigma (\triangle _V )=(-\infty ,0]$.
We do not impose other hypotheses on $V$.
Then,  for fixed $C=C(\| V \| _{L^{1,3}})$,  we have  $ |B_j^{(1)}(f)|
\le C 2^{k-  j}\| f\| _{L^2_x}^2$.
\end{lemma}
\begin{proof}
 The inequality  follows  from the following ones:
\begin{equation}\label{eq:b111}  \begin{aligned}
   &  |B_j^{(1)}(f)| \le C  2^{-  j} \|  \langle x \rangle ^3V\| _{L^1_x}
      \| f\| _{L^\infty_x}^2\le C'   2^{k-  j}\| f\| _{L^2_x}^2,
\end{aligned}
\end{equation}
with $C=C(\| V \| _{L^{1,3}})$, and where
we used  Bernstein inequality
\begin{equation}\label{eq:bern}  \begin{aligned}
   &  \|f\|_{L^\infty_x}\lesssim  2^\frac k2\|f\|_{L^2_x} .
\end{aligned}
\end{equation}
To prove the first inequality   in \eqref{eq:b111},
observe that the second line of \eqref{eq:b11}  can be bounded
 by   $ C \langle x \rangle ^3 \| f\| _{L^\infty_x}$  with $C=C(\| V \| _{L^{1,1}})$
using the following estimates, which follow from \eqref{eq:kernel2}:
\begin{equation*}
       \int _{-\infty } ^{x}|m_-(y, \tau )-1|     |f(y)| dy
\end{equation*}  \begin{equation*}
\lesssim  \| f\| _{L^\infty_x}
( \int _{-\infty } ^{x\wedge 0} \langle y \rangle ^{-2} dy +
   \int _0 ^{x\vee 0} \langle y \rangle   dy ) \lesssim \langle x \rangle ^2\| f\| _{L^\infty_x}  \, ,
\end{equation*}
and
\begin{equation*}
|m_+(x, \tau ) | \lesssim  \langle x \rangle .
\end{equation*}

\noindent Proceeding as above   the third line of \eqref{eq:b11}   can be bounded
by     $ C \langle x \rangle ^3 \| f\| _{L^\infty_x}$  with $C=C(\| V \| _{L^{1,1}})$ using  estimates like
\begin{equation*}
       \int ^{ \infty } _{x}|m_+(y, -\tau )-1|     |f(y)| dy     \lesssim
\end{equation*} \begin{equation*}
\| f
\| _{L^\infty_x} ( \int ^{ \infty } _{x\vee    0} \langle y \rangle ^{-2} dy +
   \int ^0 _{x\wedge  0} \langle y \rangle   dy )
\lesssim \langle x \rangle ^2\| f\| _{L^\infty_x}
\end{equation*}
and
\begin{equation*}
  |m_-(x, \tau ) |\lesssim  \langle x \rangle .
\end{equation*}
  This proves \eqref{eq:b111} and so also Lemma  \ref{lem:split1}.
\end{proof}

\begin{lemma}
\label{lem:split2} In addition to the hypotheseis of Lemma  \ref{lem:split1} let us assume now that
either $T(0)=0$ or $T(0)=1$.
Then    we have  $ |B_j^{(2)}(f)|  \le C 2^{k-  j}\| f\| _{L^2_x}^2$  for fixed    $C=C(\| V \| _{L^{1,3}})$.
\end{lemma}
\begin{proof}
We use \eqref{eq:kernel35} and substitute  \begin{equation}  \label{ker35bis}    \begin{aligned} &    T(-\tau )m_- (x ,-\tau )= R_+ (-\tau )e^{- 2{\bf i}\tau x }m_+ (x,-\tau )+   m_+ (x, \tau ).
\end{aligned}
\end{equation}
We then write

\begin{equation*}  \begin{aligned} &
    B_j^{(2)}(f)  =- \frac  1 {2 \pi} 2^{- 2j} \int _{{\mathbf R}}  dx  V(x)\overline{f(x)}  \\& \times\int _{\mathbf R}    d\tau \psi(2^{- j}  \tau  ) \big [ T( \tau ) m_+(x, \tau )   \int _{y<x}    e^{{\bf i}\tau (x-y)} f(y)dy\\& +  m_+(x, \tau )   \int _{y>x}   e^{{\bf i}\tau (x-y)}  f(y)dy  \\& +  R_+ (-\tau ) m_+ (x,-\tau )  \int _{y>x}   e^{-{\bf i}\tau (x+y)}  f(y)dy\big ] .
\end{aligned}
\end{equation*}
Notice that Lemma \ref{l:diadhigh}  is elementary for $| k-j|\le \kappa _0$ for any preassigned
$\kappa _0>1$. So we will focus only on the case $k-j> \kappa _0$ with a  fixed sufficiently large $\kappa _0 $.  We write
\begin{equation} \label{eq:cancel}       \begin{aligned} &  \psi(2^{- j}  \tau  )      \int _{y>x}   e^{{\bf i}\tau (x-y)}  f(y)dy   = \psi(2^{- j}  \tau  )  e^{{\bf i}\tau x}   \overbrace{\int _{{\mathbf R}}   e^{-{\bf i}\tau y }  f(y)dy}^{\sqrt{2\pi} \widehat{f}(-\tau )}  \\& -\psi(2^{- j}  \tau  )       \int _{y<x}   e^{{\bf i}\tau (x-y)}  f(y)dy = -\psi(2^{- j}  \tau  )       \int _{y<x}   e^{{\bf i}\tau (x-y)}  f(y)dy,
\end{aligned}
\end{equation}
because  $\psi(2^{- j}  \tau  ) \widehat{f}(-\tau )\equiv 0  $ for  $|j-k|> \kappa _0$  and $\kappa _0$   sufficiently large.

\noindent By  \eqref{eq:cancel} we can write

\begin{equation*}  \begin{aligned} &
    B_j^{(2)}(f)  = - \frac  1 {2 \pi}2^{- 2j} \int _{{\mathbf R}}  dx  V(x)\overline{f(x)}  \\& \times\int _{\mathbf R}    d\tau \psi(2^{- j}  \tau  ) \big [ (T( \tau ) -1) m_+(x, \tau )   \int _{y<x}    e^{{\bf i}\tau (x-y)} f(y)dy  \\& +  R_+ (-\tau ) m_+ (x,-\tau )  \int _{y>x}   e^{-{\bf i}\tau (x+y)}  f(y)dy\big ] .
\end{aligned}
\end{equation*}
We rewrite the above as

\begin{equation} \label{eq:b21} \begin{aligned} &
    B_j^{(2)}(f)  =- \frac  1 {2 \pi} 2^{- 2j} \int _{{\mathbf R}}  dx  V(x)\overline{f(x)}  \\& \times\int _{\mathbf R}    d\tau \psi(2^{- j}  \tau  ) \big \{ \big [ T( \tau ) -1 -R_+ (-\tau ) \big ] m_+(x, \tau )   \int _{y<x}    e^{{\bf i}\tau (x-y)} f(y)dy  \\& -  R_+ (-\tau )  \left (e^{-{\bf i}\tau x} m_+ (x,-\tau ) -e^{ {\bf i}\tau x} m_+ (x, \tau )\right ) \int _{y<x}  e^{-{\bf i}\tau y}  f(y)dy
    \\& +  R_+ (-\tau ) m_+ (x,-\tau )   e^{-{\bf i}\tau  x  }\int _{{\mathbf R}}   e^{-{\bf i}\tau  y }  f(y)dy\big \} .
\end{aligned}
\end{equation}
The last factor is $\sqrt{2\pi} \widehat{f}(-\tau )=0$ on the support of  $\psi(2^{- j}  \tau  )$ like after \eqref{eq:cancel}. So the last line in
\eqref{eq:b21}  cancels out.

\noindent We focus now on the terms originating from the third line of \eqref{eq:b21}.
We will set  $f_x(t):=f(t+x)$  and  $  {H{f}_x}(\tau ):= \int _{-\infty} ^0 e^{-{\bf i}\tau y} f(y+x) dy.$
We have
$$ {Hg}(\tau ) =  \int _{-\infty} ^0 e^{-{\bf i}\tau y} g(y) dy = \int_{{\mathbf R}} \widehat{\chi}_{(-\infty , 0]}(-\tau-\xi) \hat{g}(\xi) d\xi= \widehat{\chi }_{(-\infty , 0]} \ast  \widehat{g } (-\tau ),$$
where here and below we use the definition \eqref{eq:FT} of the Fourier transform.

We have also the relation $\widehat{\chi }_{(-\infty , 0]} (\tau )=-{\bf i}(2\pi )^{-\frac 12}  (\tau -{\bf i}0)^{-1}$, see page 206, Ch. 3 \cite{taylor} and take into account the definition of the Fourier transform there.
By Sokhotskyi-Plemelj formula $(\tau -{\bf i}0)^{-1}=P.V \frac 1 \tau +{\bf i}\pi \delta  (\tau )$. Then

\begin{equation} \label{eq:htran}  \begin{aligned}
 &  {Hg}(\tau ) =  \widehat{\chi }_{(-\infty , 0]} \ast  \widehat{g } (\tau ) = (2\pi)^{-1/2} \left( \pi  \widehat{g } (-\tau )  -i
\mathcal{H} g(-\tau )\right) \\&   \mathcal{H} g(\tau ) := \lim _{\epsilon \to 0^+} \int _{|\xi -\tau |\ge \epsilon}  \frac{\widehat{g } (\xi )}{\xi  -\tau }  d\xi .\end{aligned}
\end{equation}
By Lemma \ref{lem:Jost} we get
\begin{equation} \label{eq:bm}  \begin{aligned}
 &
 |e^{-{\bf i}\tau x} m_+ (x,-\tau ) -e^{ {\bf i}\tau x} m_+ (x, \tau )|\le |e^{-2{\bf i}\tau x}   -1|  \ |m_+ (x,\tau  )|\\&
+ |m_+ (x,-\tau ) -m_+ (x,\tau  )|\le
(C _1+C_2) \langle x\rangle ^2 |\tau |, \end{aligned}
\end{equation}
where the last term in the first line can be  bounded using \eqref{eq:kernel2} and the first term in the second line can be  bounded using the  mean value
theorem and  \eqref{eq:tderm}, and where $C_j=C(\|   V \| _{L^{1,j}})$.

\noindent By \eqref{eq:bm}  and by
 $|R_+ (-\tau )|\le  C \langle \tau \rangle  ^{-1}$   with $C=C(\|   V \| _{L^{1,1}}) $, which follows   by \eqref{eq:TRcoeff0},
 the   terms originating from the third line of \eqref{eq:b21} can be bounded   by a constant $C =C(\|   V \| _{L^{1,2}}) $  times
\begin{equation}\label{eq:b113}  \begin{aligned} &
      2^{- 2j} \| f\| _{L^\infty_x} \int _{{\mathbf R}}  dx \ | V(x)|\  \langle x \rangle   ^2 \int_{\mathbf R}  d\tau \ |\psi(2^{- j}  \tau  )|
\  |\tau |\ \langle \tau \rangle ^{-1}   |H  \widehat{{f}}_x(\tau )|.
\end{aligned}
\end{equation}
By $ |j-k| > \kappa _0$,
by $  \widehat{{f}}_x (\tau )=  e^{-{\bf i}\tau x}\widehat{{f}} (\tau )$ and by  \eqref{eq:htran},
we get  $\psi(2^{- j}  \tau  )  |H  \widehat{{f}}_x(\tau )|= \psi(2^{- j}  \tau  ) |\mathcal{H}  \widehat{{f}}_x(\tau )|$.
We have then the upper bound

\begin{equation*} \begin{aligned} &  |\text{\eqref{eq:b113} }|\le
2^{- 2j} \| f\| _{L^\infty_x} \|
V \| _{L^{1,2}}  \int _{|\tau |\sim 2^{j}}   d\tau \frac{|\tau |}{ \langle \tau  \rangle ^2}
\int _{|\xi |\sim 2^{k}}  \frac{ |\widehat{{f}}(\xi  )|}{|\tau -\xi |} d\xi \\&
\le 2^{-  j} \| f\| _{L^\infty_x} \|    V \| _{L^{1,2}}
\int _{|\xi |\sim 2^{k}}  { |\widehat{{f}}(\xi  )|}  d\xi
\le C'2^{\frac{k}{2}- j} \| f\| _{L^\infty_x}\| f\| _{L^2_x}
\\& \le C2^{k- j} \| f\| _{L^2_x}^2
\end{aligned}\nonumber
\end{equation*}
where we used $ |\tau -\xi |\approx |\tau   |$ and where $C=C(\|    V \| _{L^{1,2}})$.
Now we consider the contribution from the second line of \eqref{eq:b21}.
We assume
\begin{equation}\label{eq:essential}
T( 0 ) -1 -R_+ (0 )=0.
\end{equation}
\eqref{eq:essential} occurs if $T(0)=1$ (then $R_{\pm }(0)=0 $  by the identity \eqref{eq:TRcoeff})
and in   the generic case $T(0)=0$  (when $R_{\pm }(0)=-1 $, see p. 147 \cite{DT}, as can be seen setting $\tau =0$ in   \eqref{eq:kernel35}).
By  \eqref{eq:essential} and  \eqref{eq:TRcoeff1}
 for the bound near $\tau =0$ and by   \eqref{eq:TRcoeff0} for the bound away from 0,   we get
\begin{equation*}
|T( \tau  ) -1 -R_+ (-\tau  )|\le C \frac{|\tau |}{\langle \tau \rangle ^2} \text{ with $C=C(\| V \| _{L^{1,3}})$}.
\end{equation*}
Then, by a similar argument to that for the  third line   \eqref{eq:b21}
we see that the contribution is bounded by $C2^{k- j} \| f\| _{L^2_x}^2$ with $C=C(\| V \| _{L^{1,3}})$.
\end{proof}

\noindent Lemmas  \ref{lem:split1} and  \ref{lem:split2}  yield together Lemma \ref{l:diadhigh2}.
\end{proof}

\noindent The proof of Lemma \ref{l:diadhigh} follows by combining \eqref{eq:aj1} with Lemma \ref{l:diadhigh2}.

\end{proof}

We remark that if $T(0)=\frac{2a}{1+a^2}$  with $a\neq 0$ then $R_+(0)=\frac{1-a^2}{1+a^2}$, see for instance p. 512 \cite{We1}.
Then the rhs of \eqref{eq:essential} equals $2\frac{a-1}{1+a^2}\neq 0$ for $a\neq 1$
and our proof of Lemma \ref{lem:split2} breaks down.

We have proved \eqref{eq:quasidiag} for $k\le j$.  The next lemma shows that  \eqref{eq:quasidiag}
continues to hold also for  $k> j$
\begin{lemma}
\label{l:diadlow} Let $V$ be a real valued Schwartz function with  $
\sigma (\triangle _V )=(-\infty ,0]$ and with    $T(0)$  either equal to 0
or to 1.
For any  integer numbers $j,k \in {\mathbf Z} $ with $k > j$ and
for any $f \in {\mathcal S}( \mathbb{{R}}) $ satisfying \eqref{eq:suppf},
inequality \eqref{eq:quasidiag}  holds for a   $C_V$ of same type.

\end{lemma}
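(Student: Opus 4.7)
The plan is to bound the quadratic form via the distorted Fourier transform. Using Parseval for $F_V$,
\[
\langle \varphi(2^{-j}\sqrt{-\triangle_V}) f, f \rangle = \int_{\mathbf{R}} \varphi(2^{-j}|\tau|)\, |F_V f(\tau)|^2\, d\tau,
\]
so everything reduces to estimating the integrand on $|\tau|\sim 2^j$. Using $\Psi(x,\tau) = T(\tau) e^{i\tau x} m_+(x,\tau)$ for $\tau>0$, and the analogous formula with $m_-(x,-\tau)$ for $\tau<0$, one splits
\[
F_V f(\tau) = T(\tau)\, \widehat{f}(\tau) + T(\tau)\, F\bigl[(m_+(\cdot,\tau) - 1)\, f\bigr](\tau),\qquad \tau>0.
\]
The naive factorization $\varphi(s) = s^2\psi(s)$ used in Lemma \ref{l:diadhigh} would now produce an error of order $2^{2(k-j)}$ (going the wrong way), so this approach is indispensable: for $|\tau|\sim 2^j$ with $|k-j|>\kappa_0$ large, the first summand vanishes since $\widehat{f}$ is supported in the disjoint annulus $|\xi|\sim 2^k$.

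For the second summand I would insert the Volterra formula
\[
m_+(x,\tau) - 1 = \int_x^\infty \frac{e^{2i\tau(y-x)}-1}{2i\tau}\, V(y)\, m_+(y,\tau)\, dy
\]
into $F[(m_+-1)f](\tau)$ and use Fubini to rewrite it as a $y$-integral against the partial Fourier transforms
\[
G_{\pm}(y,\tau):= \int_{-\infty}^y e^{\mp i\tau x} f(x)\, dx.
\]
Plancherel in $y$, combined with $\widehat{\chi_{(0,\infty)}}(\eta) = i(\sqrt{2\pi})^{-1}(\eta + i 0^+)^{-1}$, yields the key estimate
\[
\|G_{\pm}(\cdot,\tau)\|_{L^2_y}^{\,2} = \int \frac{|\widehat{f}(\eta\pm\tau)|^2}{\eta^2}\, d\eta \lesssim 2^{-2k}\, \|f\|_{L^2}^{\,2}
\]
uniformly for $|\tau|\sim 2^j$ with $k-j$ large, since $\widehat{f}(\eta\pm\tau)\neq 0$ then forces $|\eta|\sim 2^k$. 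Together with the uniform bound $\|V\, m_+(\cdot,\tau)\|_{L^2_y}\lesssim_V 1$ from Lemma \ref{lem:Jost} and Cauchy--Schwarz in $y$, this produces $|F[(m_+ - 1) f](\tau)|^2\lesssim C_V \tau^{-2}\, 2^{-2k}\, \|f\|_{L^2}^{\,2}$, hence
\[
\int_{|\tau|\sim 2^j} |F_V f(\tau)|^2\, d\tau \lesssim C_V\, 2^{-2k}\|f\|_{L^2}^{\,2}\int_{|\tau|\sim 2^j} |T(\tau)|^2\, \tau^{-2}\, d\tau,
\]
with an analogous estimate for the $\tau<0$ contribution involving $m_-(\cdot,-\tau)$.

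The main obstacle is controlling this $\tau$-integral in the regime of very negative $j$, where the $\tau^{-2}$ singularity from the Volterra step concentrates near the origin. Here the hypothesis $T(0)\in\{0,1\}$ is essential. In the generic case $T(0)=0$, Lemma \ref{lem:TRcoeff} gives $|T(\tau)|\lesssim |\tau|$ near zero, so $|T(\tau)|^2\tau^{-2}\lesssim 1$ and the remaining integral is $\sim 2^j$, recovering the factor needed to close the bound. In the case $T(0)=1$, one instead recombines the contributions from the two half-lines using the algebraic identity $T(\tau)-1-R_+(-\tau) = O(|\tau|)$ from \eqref{eq:essential}, exactly as in the proof of Lemma \ref{lem:split2}. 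Once this low-energy regime is handled, combining with Bernstein's inequality $\|f\|_{L^\infty}\lesssim 2^{k/2}\|f\|_{L^2}$ (to trade $L^\infty$ bounds for $L^2$ ones where needed) yields the announced estimate \eqref{eq:quasidiag} for $k>j$.
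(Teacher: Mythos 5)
Your Parseval route through $F_V$, the splitting $m_\pm=1+(m_\pm-1)$, the vanishing of $T(\tau)\widehat f(\tau)$ on $|\tau|\sim 2^j$, and the Volterra-plus-Plancherel estimate $\|G_\pm(\cdot,\tau)\|_{L^2_y}\lesssim 2^{-k}\|f\|_{L^2}$ are all sound, and this is genuinely different from the paper's argument (which instead writes $f=\widetilde\varphi(2^{-k}\sqrt{-\triangle})f$ with $\widetilde\varphi(\tau)=\tau^2\psi(\tau)$, transfers $2^{-2k}\triangle=2^{-2k}(\triangle_V+V)$ onto the $V$-side, and estimates the resolvent kernel directly). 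However, as written your proof does not reach \eqref{eq:quasidiag} in two regimes that the lemma covers. First, even in the generic case your final bound is $C_V\,2^{-2k}\|f\|^2_{L^2}\int_{|\tau|\sim 2^j}|T(\tau)|^2\tau^{-2}d\tau\lesssim C_V\,2^{j-2k}\|f\|^2_{L^2}$, which is $\le C_V 2^{j-k}\|f\|^2_{L^2}$ only when $k\ge 0$; for $j<k<0$ it overshoots the target by $2^{|k|}$, and this matters because Corollary \ref{l:ineperteq} sums over \emph{all} $j,k\in{\mathbf Z}$ and needs the off-diagonal decay uniformly. The loss comes from Cauchy--Schwarz in $y$ against $\|Vm_+(\cdot,\tau)\|_{L^2_y}$; since $\widehat{G_\pm(\cdot,\tau)}$ is supported in $|\eta|\sim 2^k$, pairing in frequency against $\widehat{Vm_+(\cdot,\tau)}\in L^\infty$ gives $2^{k/2}\|G_\pm\|_{L^2_y}\lesssim 2^{-k/2}\|f\|_{L^2}$ and repairs this, but your closing appeal to Bernstein does not do this job (no $L^\infty$ norm of $f$ ever appears in your chain).

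The second and more serious gap is the non-generic case $T(0)=1$. There the factor $|T(\tau)|\lesssim|\tau|$ that neutralizes the $\tau^{-1}$ from the Volterra kernel is absent, and your bound degenerates to $C_V 2^{-2k}2^{-j}\|f\|^2_{L^2}$, which diverges from $2^{j-k}$ as $j\to-\infty$. Your proposed fix, ``recombine the two half-lines using $T(\tau)-1-R_+(-\tau)=O(|\tau|)$ as in Lemma \ref{lem:split2}'', has no counterpart in your decomposition: in the Parseval formulation each sign of $\tau$ involves a single Jost function ($T(\tau)f_+(x,\tau)$ for $\tau>0$), there is no pairing of the regions $y<x$ and $y>x$, and the combination $T-1-R_+$ in the paper only arises after substituting the scattering relation \eqref{eq:kernel35}/\eqref{ker35bis} inside the resolvent kernel and cancelling a full-line Fourier transform, a structure your $|F_Vf(\tau)|^2$ computation never produces. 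Making the cancellation \eqref{eq:essential} visible in your framework would require an additional decomposition (for instance rewriting $T(\tau)m_+(x,\tau)=R_-(\tau)e^{-2{\bf i}\tau x}m_-(x,\tau)+m_-(x,-\tau)$ and reorganizing the resulting boundary terms), and without it the case $T(0)=1$ of the lemma is simply not proved.
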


\begin{proof} The proof is similar to that of Lemma \ref{l:diadhigh}.

\noindent We have $f=\widetilde{\varphi} \left( 2^{-k} \sqrt{-\triangle  }\right) f $
for some $\widetilde{\varphi}\in C^\infty _0({\mathbf R} _+, [0,1])$
and   we have
\begin{equation*}   \begin{aligned} &  \langle \varphi \left(
2^{-j}\sqrt{-\triangle _V} \right)     f ,   f \rangle_{L^2_x}   =
-2^{-2k}\langle \varphi \left( 2^{-j}\sqrt{-\triangle _V} \right)   f ,
\triangle  \psi  \left(2^{-k}
\sqrt{-\triangle  } \right)f \rangle_{L^2_x}
\end{aligned}
\end{equation*}
with $\tau ^2 \psi (\tau ) =\widetilde{\varphi} (\tau )$. Then we have

\begin{equation*}   \begin{aligned}    \langle \varphi \left( 2^{-j}\sqrt{-\triangle _V} \right)     f ,   f \rangle_{L^2_x}
&=   -2^{-2k}\langle \triangle _V\varphi
\left(2^{-j}\sqrt{-\triangle _V} \right)   f ,
\psi  \left( 2^{-k} \sqrt{-\triangle  } \right)
f \rangle_{L^2_x} \\& -2^{-2k}\langle
V\varphi \left( 2^{-j}\sqrt{-\triangle _V} \right)   f ,
\psi  \left( 2^{-k} \sqrt{-\triangle  } \right)
f \rangle_{L^2_x}
\end{aligned}
\end{equation*}
It is straightforward that, for a constant $C$ independent of $V$,

 \begin{equation}\label{eq:high0}   \begin{aligned}
 2^{-2k}| \langle \triangle _V\varphi \left( 2^{-j}\sqrt{-\triangle _V} \right)   f ,
\psi  \left( 2^{-k}\sqrt{-\triangle  } \right)f \rangle_{L^2_x} |
\le C 2^{2j-2k} \| f\| ^2_{L^2_x}.
\end{aligned}
\end{equation}

\noindent In the sequel we prove the following for $C=C(\|V \|_{L^{1,3}})$,
which with \eqref{eq:high0} yields Lemma
 \ref{l:diadlow}:
    \begin{equation}\label{eq:high3}  \begin{aligned}& 2^{-2k}|\langle V\varphi
\left ( 2^{-j}\sqrt{-\triangle _V} \right ) f,  \psi  \left ( 2^{-k}\sqrt{-\triangle  } \right ) f \rangle_{L^2_x}|
\le C 2^{j-k} \|f\|^2_{L^2_x} . \end{aligned}\end{equation}
  Denote by $K (x,y)$ the integral kernel  of $\varphi\left ( 2^{-j}\sqrt{-\triangle _V} \right )$. Then,
	setting $g(\tau )= \varphi  \left ( 2^{-j}\sqrt{\tau } \right ) $,     from \eqref{eq:absprin4} we get

    \begin{equation}  \label{eq:high31} \begin{aligned}
 &    K (x,y) \sim   \chi _{ x>y} (x,y)    \int _{\mathbf R}   \varphi  (2^{- j}  \tau  )
   m_+(x, \tau )\, m_-(y, \tau )\,  T( \tau )\,  e^{{\bf i}\tau (x-y)} \\& + \chi _{ x<y} (x,y) \int _{\mathbf R}   \varphi  (2^{- j}  \tau  )  m_-(x, -\tau )m_+(y, -\tau )  T(- \tau )   \,  e^{{\bf i}\tau (x-y)} d\tau
     \end{aligned} \nonumber\end{equation}
     with $ \chi _{ x\gtrless y} (x,y)=1$ for $x\gtrless y$ and
     $ \chi _{ x\gtrless y} (x,y)=0$ for $x\lessgtr y$.
     Then the bound \eqref{eq:high0} is obtained,    for $\Psi (x)= \psi  \left( \frac{\sqrt{-\triangle  }}{2^k} \right) {f}$,  by bounding

\begin{equation}\label{eq:I2}  \begin{aligned} &  2^{-2k}  \int _{{\mathbf R}}  dx   \overline{\Psi (x)}  V(x)  \int _{\mathbf R}    d\tau \varphi(2^{- j}  \tau  )  \\& \times \big [ T( \tau )   m_+(x, \tau )   \int _{y<x}m_-(y, \tau )   e^{{\bf i}\tau (x-y)}  f (y)dy  \\& +   T(- \tau )  m_-(x, -\tau )  \int _{y>x} m_+(y, -\tau )  e^{{\bf i}\tau (x-y)} f (y)dy\big ] .
\end{aligned}
\end{equation}

\noindent We split \eqref{eq:I2}   as  $ I_{1}+I_{2} $ where
\begin{equation}\label{eq:I21}  \begin{aligned} &   I_1
 :=  2^{-2k} \int _{{\mathbf R}}  dx \overline{\Psi (x)}  V(x)    \int _{\mathbf R}    d\tau \varphi(2^{- j}  \tau  )  \\& \times \big [ T( \tau )   m_+(x, \tau )   \int _{y<x}\left ( m_-(y, \tau ) -1\right )  e^{{\bf i}\tau (x-y)}f (y)dy  \\& +   T(- \tau )  m_-(x, -\tau )  \int _{y>x}\left ( m_+(y, -\tau ) -1\right ) e^{{\bf i}\tau (x-y)} f (y)dy\big ]   \,
\end{aligned}
\end{equation} 
and

\begin{equation}\label{eq:I22}  \begin{aligned} &   I_2
    :=  2^{-2k} \int _{{\mathbf R}}  dx \overline{\Psi (x)}  V(x)    \int _{\mathbf R}    d\tau \varphi (2^{- j}  \tau  )  \\& \times \big [ T( \tau )  m_+(x, \tau )   \int _{y<x}   e^{{\bf i}\tau (x-y)} f (y)dy  \\& +   T(- \tau ) m_-(x, -\tau )  \int _{y>x}   e^{{\bf i}\tau (x-y)}f (y)dy\big ] .
\end{aligned}
\end{equation}
We  start with $I_1$   and show for $C=C(\| V \| _{L^{1,3}})$
    \begin{equation}\label{eq:I111}  \begin{aligned}
   &  |I_1| \le  C 2^{ j-   k}\| f\| _{L^2_x}^2
\end{aligned}
\end{equation}
 To prove \eqref{eq:I111} we  focus for definiteness on the
 second line of   \eqref{eq:I21} (the contribution from the third can be treated similarly). Then we have
      \begin{equation*}   \begin{aligned} & 2^{-2k} \int _{{\mathbf R}}  dx |{\Psi (x)}  V(x)|
\int _{\mathbf R}    d\tau  |\varphi(2^{- j}  \tau  )|\text{second line \eqref{eq:I21}}|\lesssim \\&
      2^{-2k} \int _{{\mathbf R} }  dx   | \Psi (x) V(x)|  \langle x\rangle   \int _{|\tau |\sim 2^j }    d\tau    \left ( \int _{-\infty}^{0\wedge x} \langle y\rangle ^{-2}  |f(y)|dy  +  \int _0 ^{x\vee 0}   \langle y\rangle    |f (y)|dy\right ) \\& \le C' 2^{ j-2k}
        \|  \Psi \| _{L^\infty_x}    \|   {f} \| _{L^\infty_x}\le C^{\prime\prime}  2^{ j-k }
\| \psi  \left( 2^{-k} \sqrt{-\triangle  _V} \right) {f} \| _{L^2_x}\|  {f} \| _{L^2_x} \\
& \le C   2^{ j-k } \|  {f} \| _{L^2_x} ^2
\end{aligned}
\end{equation*}
with constants  $ C(\| V \| _{L^{1,3}})$ and where we used Bernestein inequality \eqref{eq:bern}.
We turn now to $I_2$ and show  for $ C=C(\| V \| _{L^{1,3}})$\begin{equation}\label{eq:I222}  \begin{aligned}
   &  |  I_{2 } | \le  C 2^{j-  k}\| f\| _{L^2_x}^2.
\end{aligned}
\end{equation}
We substitute   \eqref{ker35bis}
to get

\begin{equation*}  \begin{aligned} & I_2  = 2^{-2k}  \int _{{\mathbf R}}  dx  \overline{\Psi (x)}V(x)  \\& \times\int _{\mathbf R}    d\tau \varphi (2^{- j}  \tau  ) \big [ (T( \tau ) -1)   m_+(x, \tau )   \int _{y<x}    e^{{\bf i}\tau (x-y)} f (y)dy  \\& +  R_+ (-\tau ) m_+ (x,-\tau )  \int _{y>x}   e^{-{\bf i}\tau (x+y)} f (y)dy \big ] .
\end{aligned}
\end{equation*}
We rewrite, proceeding like for  \eqref{eq:b21},

\begin{equation} \label{eq:I201} \begin{aligned} &
   I_2 = 2^{- 2k} \int _{{\mathbf R}}  dx \overline{\Psi (x)}V(x) \\& \times\int _{\mathbf R}    d\tau \varphi (2^{- j}  \tau  ) \big [ (T( \tau ) -1 -R_+ (-\tau ) ) m_+(x, \tau )   \int _{y<x}    e^{{\bf i}\tau (x-y)} f (y)dy  \\& -  R_+ (-\tau )  \left (e^{-{\bf i}\tau x} m_+ (x,-\tau ) -e^{ {\bf i}\tau x} m_+ (x, \tau )\right ) \int _{y<x}  e^{-{\bf i}\tau y}  f(y)dy
     \big ] .
\end{aligned}
\end{equation}
Then proceeding like in Lemma \ref{lem:split2} we get for $C=C(\| V \|  _{L^{1,3}})$

\begin{equation*}\label{eq:I213}  \begin{aligned} & |I_2|\le C
     2^{- 2k} \| \Psi \| _{L^\infty_x} \int _{{\mathbf R}}  dx | V(x)| \langle x \rangle  ^2 \int  d\tau |\varphi (2^{- j}  \tau  )|   \frac{|\tau |}{ \langle \tau  \rangle ^2}  |H  \widehat{{f}_x}(\tau )|
\end{aligned}
\end{equation*}
with  $  {H\widehat{f}_x}(\tau ):= \int _{-\infty} ^0 e^{-{\bf i}\tau y} f(y+x) dy $ like earlier. Since now we focus only on
   $k-j>\kappa _0$ and we get

\begin{equation*}\label{eq:I214}  \begin{aligned} &  | I_2|\le C_1
2^{- 2k} \| \Psi \| _{L^\infty_x} \| V \|  _{L^{1,2}}  \int _{|\tau |\sim 2^{j}}
d\tau \frac{|\tau |}{ \langle \tau  \rangle ^2}  \int _{|\xi |\sim 2^{k}}
\frac{ |\widehat{{f}}(\xi  )|}{|\tau -\xi |} d\xi \\&\le  C_22^{2 j-2k}  \| \Psi \| _{L^\infty_x}    2^{-k}  \int _{|\xi |\sim 2^{k}}  { |\widehat{{f}}(\xi  )|}  d\xi \\ & \le C_3 2^{2 j-2k} \| f\| _{L^2_x } 2^{-\frac{k}{2}}\| \psi  \left( {2^{-k}}  {\sqrt{-\triangle  }}
\right) {f}\| _{L^\infty_x }   \le C 2^{2 j-2k} \| f\| _{L^2_x }^2.
\end{aligned}
\end{equation*}
where the constants are  $ C(\| V \|  _{L^{1,3}})$.  This completes the proof of \eqref{eq:I222} which, along with \eqref{eq:I111} yields \eqref{eq:high3} and completes the proof of Lemma \ref{l:diadlow}.

\end{proof}

>From Lemma \ref{l:diadlow} and Lemma \ref{l:diadhigh} we arrive at the following crucial result.

\begin{corollary}
\label{l:ineperteq}
For $0 \leq s < 1/2$ and for any $f \in C_0^\infty ({\mathbf R})$   we have
$$ \|(-\triangle )^{\frac s2} f \|_{L^2_x} \sim  \|(-\triangle +V)^{\frac s2} f \|_{L^2_x} . $$

\end{corollary}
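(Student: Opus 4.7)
The strategy is to combine the quasi-diagonal estimate of Lemmas \ref{l:diadhigh}--\ref{l:diadlow} with a Paley--Littlewood decomposition associated to $-\triangle$, treating the problem as a Schur-type multiplier bound. Fix a nonnegative $\psi\in C^\infty_0({\mathbf R}_+)$ with $\sum_{j\in{\mathbf Z}}\psi^2(2^{-j}\tau)=1$ for all $\tau>0$, put $\varphi:=\psi^2$, and let $\widetilde P_k:=\widetilde\psi(2^{-k}\sqrt{-\triangle})$ be the standard (flat) Paley--Littlewood block, so that $f=\sum_k\widetilde P_k f$ with ${\rm supp}\,\widehat{\widetilde P_k f}\subset\{|\xi|\sim 2^k\}$, $\|(-\triangle)^{s/2}f\|_{L^2}^2\sim\sum_k 2^{2ks}\|\widetilde P_k f\|_{L^2}^2$, and $\|(-\triangle_V)^{s/2}f\|_{L^2}^2\sim\sum_j 2^{2js}\|\psi(2^{-j}\sqrt{-\triangle_V})f\|_{L^2}^2$. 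Applying Lemmas \ref{l:diadhigh}--\ref{l:diadlow} to $\widetilde P_k f$ with the cutoff $\varphi=\psi^2$ produces the key input
\[
\|\psi(2^{-j}\sqrt{-\triangle_V})\,\widetilde P_k f\|_{L^2}^2=\langle\varphi(2^{-j}\sqrt{-\triangle_V})\widetilde P_k f,\widetilde P_k f\rangle\leq C_V\,2^{-|k-j|}\|\widetilde P_k f\|_{L^2}^2.
\]

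To establish $\|(-\triangle_V)^{s/2}f\|_{L^2}\lesssim\|(-\triangle)^{s/2}f\|_{L^2}$, I expand $\psi(2^{-j}\sqrt{-\triangle_V})f=\sum_k\psi(2^{-j}\sqrt{-\triangle_V})\widetilde P_k f$, apply the triangle inequality and the input above, and set $a_k:=2^{ks}\|\widetilde P_k f\|_{L^2}$. This yields $2^{js}\|\psi(2^{-j}\sqrt{-\triangle_V})f\|_{L^2}\lesssim(h\ast a)_j$, with $h_m:=2^{ms-|m|/2}$. Young's inequality $\ell^1\ast\ell^2\hookrightarrow\ell^2$ then gives
\[
\sum_j 2^{2js}\|\psi(2^{-j}\sqrt{-\triangle_V})f\|_{L^2}^2\lesssim\|h\|_{\ell^1}^2\|a\|_{\ell^2}^2\sim\|h\|_{\ell^1}^2\|(-\triangle)^{s/2}f\|_{L^2}^2,
\]
and $\|h\|_{\ell^1}=\sum_m 2^{ms-|m|/2}$ is finite precisely when $s<1/2$, which produces the range in the statement.

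For the converse $\|(-\triangle)^{s/2}f\|_{L^2}\lesssim\|(-\triangle_V)^{s/2}f\|_{L^2}$, I would run the symmetric scheme after proving the analog of Lemmas \ref{l:diadhigh}--\ref{l:diadlow} with the roles of $\sqrt{-\triangle}$ and $\sqrt{-\triangle_V}$ interchanged: for $g$ whose distorted Fourier transform $F_Vg$ is supported in $\{|\tau|\sim 2^k\}$, establish $\langle\varphi(2^{-j}\sqrt{-\triangle})g,g\rangle\leq C_V\,2^{-|k-j|}\|g\|_{L^2}^2$. On this side $\varphi(2^{-j}\sqrt{-\triangle})$ is the elementary Fourier multiplier, while $g$ is unpacked via the inverse distorted formula \eqref{eq:distFTinv}; the Jost-function bounds \eqref{eq:kernel2}--\eqref{eq:tderm}, the scattering identity \eqref{eq:kernel35}, and the cancellation $T(0)-1-R_+(0)=0$ that drives Lemma \ref{lem:split2} transplant almost verbatim into this dual setting, so the same Paley--Littlewood/Young argument closes the estimate. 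The main obstacle is precisely the $\ell^1$-summability of $h$: the positive-$m$ tail $\sum_{m\geq 0}2^{m(s-1/2)}$ forces the strict inequality $s<1/2$ and cannot be relaxed by this method; it is also the reason the authors flag the dichotomy $T(0)\in\{0,1\}$, since that dichotomy is exactly what makes the off-diagonal decay $2^{-|k-j|}$ available to feed into the Young step.
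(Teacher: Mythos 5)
Your first half is exactly the paper's mechanism: flat Paley--Littlewood blocks for $f$, the quasi-diagonal bound of Lemmas \ref{l:diadhigh}--\ref{l:diadlow} applied with the cutoff $\varphi=\psi^2$, and a Young/Schur summation whose kernel $2^{ms-|m|/2}$ is $\ell^1$ precisely for $0\le s<1/2$; the paper organizes this as a triple sum over $j,k,l$ with Cauchy--Schwarz rather than your triangle-inequality-plus-convolution form, but the substance is identical, including the role of the $T(0)\in\{0,1\}$ dichotomy.

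The converse direction is where your proposal has a real gap. You reduce it to a new, unproved lemma (quasi-diagonal decay for $\varphi(2^{-j}\sqrt{-\triangle})$ acting on $g$ with $F_Vg$ supported in an annulus) and assert that the proof of Lemmas \ref{lem:split1}--\ref{lem:split2} "transplants almost verbatim." It does not, as stated: those proofs use the flat Fourier support of $f$ in essential ways --- the Bernstein inequality \eqref{eq:bern}, and above all the cancellation step after \eqref{eq:cancel}, where $\psi(2^{-j}\tau)\widehat f(-\tau)\equiv 0$ because $\widehat f$ lives at scale $2^k$ with $|j-k|$ large --- while in your dual setting the flat transform of $g$ is no longer compactly supported and the whole representation formula \eqref{eq:b1} would have to be rebuilt around $F_V$ rather than $F$. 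So the dual lemma is not a routine copy and would need its own argument. More importantly, it is unnecessary: since $\varphi\geq 0$, the proven estimate \eqref{eq:quasidiag} is the operator-norm bound $\|\varphi^{1/2}(2^{-j}\sqrt{-\triangle_V})\,\widetilde P_k\|_{L^2\to L^2}\lesssim 2^{-|j-k|/2}$, and taking adjoints (both factors are self-adjoint) gives $\|\widetilde P_k\,\varphi^{1/2}(2^{-j}\sqrt{-\triangle_V})\|_{L^2\to L^2}\lesssim 2^{-|j-k|/2}$, which is exactly what is needed to expand $f$ in $V$-Paley--Littlewood blocks and estimate $\|(-\triangle)^{s/2}f\|_{L^2}$ by the same summation; this is what the paper's "the proof of $\lesssim$ is similar" amounts to. Replacing your asserted dual lemma by this adjoint observation closes the argument without any new scattering-theoretic work.
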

\begin{proof} The proof of $\gtrsim$ is as follows (that of $\lesssim$ is similar).
We have

\begin{equation} \begin{aligned} &  \|(-\triangle +V)^{\frac s2 } f \|_{L^2_x}^2
\\& \sim \sum_{j ,k,l\in {\mathbf Z}} 2^{2js} \langle   \varphi
\left(  \frac{\sqrt{-\triangle _V }}{2^j}\right)  \varphi \left( \frac{\sqrt{-\triangle  }}{2^k}
\right)  f ,
\varphi \left(\frac{\sqrt{-\triangle _V }}{2^j}\right)
\varphi \left( \frac{\sqrt{-\triangle  }}{2^l}\right)  f
\rangle_{L^2_x} \\& \le C   \sum_{j ,k,l\in {\mathbf Z}} 2^{2js}
2^{-\frac{1}{2}|j-k|-\frac{1}{2}|j-l|} \| \varphi \left(  \frac{\sqrt{-\triangle  }}{2^k}\right) f \| _{L^2_x}  \| \varphi \left(  \frac{\sqrt{-\triangle  }}{2^l}\right) f \| _{L^2_x}   \\& \le C' \sum_{ k\in {\mathbf Z}}  2^{2ks}
\| \varphi \left(  \frac{\sqrt{-\triangle  }}{2^k}\right) f \| _{L^2_x} ^2
\sim C'\|(-\triangle  )^{\frac s2 } f \|_{L^2_x}^2.
\end{aligned} \nonumber
\end{equation}
Here we have used Young's inequality and, for a fixed $C$,
 \begin{equation} \begin{aligned} &    2^{-ls} \sum_{j ,k } 2^{2js} 2^{-\frac{1}{2}|j-k|-\frac{1}{2}|j-l|} 2^{-ks}\\& = 2^{-ls} \sum_{j   } 2^{2js} 2^{ -\frac{1}{2}|j-l|}  \big [ 2^{\frac{j}{2}}\sum_{ k\ge j }    2^{-ks-\frac{k}{2}}  + 2^{-\frac{j}{2}}\sum  _{ k\le j-1 }  2^{ \frac{k}{2}  -ks}    \big ] \\& \sim  2^{-ls} \sum_{j   } 2^{2js} 2^{ -\frac{1}{2}|j-l|}  2^{-js}
 =  2^{\frac{l}{2}  -ls }\sum_{j\ge l   }  2^{ js -\frac{1}{2}j} + 2^{-\frac{l}{2}-ls}\sum_{j\le l-1   }  2^{ js +\frac{1}{2}j}  \le C.
\end{aligned} \nonumber
\end{equation}

\end{proof}

\begin{remark}
\label{rem:discrmodes} The proof of Corollary \ref{l:ineperteq} continues to hold also when  from Hypothesis (H) we drop the requirement that
$\sigma   (\triangle _V) =(-\infty , 0] $  but for $f$
  we require additionally
$\langle f , \phi \rangle _{L^2}=0$ for all eigenfunctions $\phi $ of $\triangle _V$.
\end{remark}

\section{Proof of Lemma \ref{lem:boundA}}
\label{sec:boundA}

\begin{lemma}
  \label{lem:kato}   For $V_1=2V+ x\frac{d}{dx}V$, for $A(s)$ the operator
  in \eqref{JVs} and for $0< s<2$ we have for a constant $c(s)$

  \begin{equation}\label{eq:kato1}\begin{aligned} & A(s)=c(s) \int _0^\infty
\tau^{\frac{s}{2}}  (\tau - \triangle _V) ^{-1}V_1 (\tau - \triangle _V) ^{-1}
d\tau .
\end{aligned}  \end{equation}

\end{lemma}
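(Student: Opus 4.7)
The plan is to derive the integral representation from the Balakrishnan formula for fractional powers of positive self-adjoint operators. Setting $A=-\triangle_V$, since $\sigma(A)=[0,\infty)$ we have for $0<s<2$
\begin{equation*}
A^{s/2}=\frac{\sin(\pi s/2)}{\pi}\int_0^\infty \tau^{s/2-1}\,A(\tau+A)^{-1}\,d\tau,
\end{equation*}
so that $c(s)=\sin(\pi s/2)/\pi$ is the natural candidate. The first step is to compute the commutator $[x\partial_x,A]$ at the operator level. A short calculation in $d=1$ gives $[x\partial_x,-\partial_x^2]=2\partial_x^2$ and $[x\partial_x,V]=xV'$, hence
\begin{equation*}
[x\partial_x,-\triangle_V]=-2(-\triangle_V)+2V+xV'=-2A+V_1.
\end{equation*}

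Next, I would push $x\partial_x$ through the integral representation. Using $A(\tau+A)^{-1}=1-\tau(\tau+A)^{-1}$ and the standard resolvent commutator identity,
\begin{equation*}
[x\partial_x,A(\tau+A)^{-1}]=-\tau[x\partial_x,(\tau+A)^{-1}]=\tau(\tau+A)^{-1}(-2A+V_1)(\tau+A)^{-1}.
\end{equation*}
Multiplying by $\tau^{s/2-1}$ and integrating yields
\begin{equation*}
[x\partial_x,A^{s/2}]=c(s)\int_0^\infty\!\tau^{s/2}(\tau+A)^{-1}(-2A+V_1)(\tau+A)^{-1}\,d\tau.
\end{equation*}

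The key algebraic step is to recognise that the $-2A$ piece reproduces $-sA^{s/2}$ and so cancels the $sA^{s/2}$ in the definition of $A(s)$. Since $A$ commutes with the resolvent, $(\tau+A)^{-1}A(\tau+A)^{-1}=A(\tau+A)^{-2}=-\frac{d}{d\tau}A(\tau+A)^{-1}$. Integration by parts, noting that the boundary terms vanish at $\tau=0$ (because of $\tau^{s/2}$ with $s>0$) and at $\tau=\infty$ (because $\tau^{s/2}A(\tau+A)^{-1}$ decays like $\tau^{s/2-1}\to 0$ for $s<2$), gives
\begin{equation*}
\int_0^\infty\tau^{s/2}(\tau+A)^{-1}A(\tau+A)^{-1}\,d\tau=\frac{s}{2}\int_0^\infty\tau^{s/2-1}A(\tau+A)^{-1}\,d\tau=\frac{s}{2c(s)}\,A^{s/2}.
\end{equation*}
Inserting this back, the $-2A$ contribution becomes $-sA^{s/2}$, so
\begin{equation*}
A(s)=s\,A^{s/2}+[x\partial_x,A^{s/2}]=c(s)\int_0^\infty\tau^{s/2}(\tau-\triangle_V)^{-1}V_1(\tau-\triangle_V)^{-1}\,d\tau,
\end{equation*}
which is \eqref{eq:kato1}.

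The main technical obstacle is justifying that the various formal manipulations make sense as operator identities on a suitable dense domain (e.g.\ Schwartz functions), in particular that the commutator of $x\partial_x$ with $(\tau+A)^{-1}$ has the stated form and that the boundary terms in the integration by parts truly vanish in the strong topology on Schwartz data. These are routine (using that $V$ is Schwartz and $A$ has purely absolutely continuous spectrum $[0,\infty)$ under hypothesis (H)), but they need to be written out carefully; everything else is algebra with the Balakrishnan representation.
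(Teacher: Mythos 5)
Your proof is correct and follows essentially the same route as the paper: both start from the Balakrishnan-type representation $(-\triangle_V)^{s/2}=c(s)(-\triangle_V)\int_0^\infty\tau^{s/2-1}(\tau-\triangle_V)^{-1}d\tau$, use $[x\partial_x,-\triangle_V]=2\triangle_V+V_1$ together with the resolvent commutator identity, and cancel the $s(-\triangle_V)^{s/2}$ term by an integration by parts in $\tau$ (the paper performs this last step in the spectral variable, you at the operator level, which is the same computation).
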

\begin{proof} Set $S:=x\partial _x$.
Recall the formula
\begin{equation*}\label{eq:kato2} \begin{aligned} &   (- \triangle_V)^{\frac{s}{2}} =c(s)(- \triangle _V) \int _0^\infty
\tau^{\frac{s}{2}-1}  (\tau - \triangle _V) ^{-1}
d\tau      \end{aligned}
 \end{equation*}
for $0<s<2$  and $[c(s)]^{-1}=\int _0^\infty
\tau^{\frac{s}{2}-1}  (\tau  +1) ^{-1}
d\tau    $.
Then \begin{equation}\label{eq:kato11}\begin{aligned} &
    A(s)= { s} (- \triangle_V)^{\frac{s}{2}}+ c(s) \int _0^\infty \tau^{\frac{s}{2}-1}  \left[S ,- \triangle _V  (\tau - \triangle _V) ^{-1} \right]  d\tau.   \end{aligned}
 \end{equation}
 We have

\begin{equation*} \label{eq:kato3}\begin{aligned} &  \left[S ,- \triangle _V  (\tau - \triangle _V) ^{-1} \right] =\left[S ,- \triangle _V   \right] (\tau - \triangle _V) ^{-1}  - \triangle _V  \left[S , (\tau - \triangle _V) ^{-1} \right]  =\\& \left[S ,- \triangle _V   \right] (\tau - \triangle _V) ^{-1}  + \triangle _V  (\tau - \triangle _V) ^{-1}\left[S ,  - \triangle _V  \right] (\tau - \triangle _V) ^{-1}    \end{aligned}
 \end{equation*}
and also
\begin{equation*} \label{eq:kato4}\begin{aligned} &  \left[S , - \triangle_V \right] =  \left[S , - \triangle  \right] + \left[S ,  V \right]  =2\triangle +SV = 2(\triangle   -V) +V_1= 2 \triangle _V  +V_1.   \end{aligned}
 \end{equation*}
Then we get
\begin{equation} \label{eq:kato5}\begin{aligned} &  \left[S ,- \triangle _V  (\tau - \triangle _V) ^{-1} \right] =   2 \triangle _V     (\tau - \triangle _V) ^{-1} +2 \triangle _V ^2 (\tau - \triangle _V) ^{-2} \\& +
  V _1    (\tau - \triangle _V) ^{-1}
 + \triangle _V  (\tau - \triangle _V) ^{-1} V_1 (\tau - \triangle _V) ^{-1} \\& =2 \tau \triangle _V  (\tau - \triangle _V) ^{-2} +\tau (\tau - \triangle _V) ^{-1}   V _1    (\tau - \triangle _V) ^{-1}  .   \end{aligned}
 \end{equation}
 Inserting in \eqref{eq:kato11} we get
 \begin{equation}\label{eq:kato6}\begin{aligned} &
    A(s)= { s} (- \triangle_V)^{\frac{s}{2}}+ 2c(s) \int _0^\infty \tau^{\frac{s}{2} } \triangle _V  (\tau - \triangle _V) ^{-2}  d\tau \\& +  c(s) \int _0^\infty \tau^{\frac{s}{2} }  (\tau - \triangle _V) ^{-1}   V _1    (\tau - \triangle _V) ^{-1}  d\tau.   \end{aligned}
 \end{equation}
 Then  \eqref{eq:kato1} follows from the fact that the first line of the r.h.s. is 0: for $y>0$ we have integrating by parts
\begin{equation*}\label{eq:kato7}\begin{aligned} &
    -2c(s) y\int _0^\infty \tau^{\frac{s}{2} }  (\tau +y) ^{-2}  d\tau   =- 2c(s)y \frac{s}{2} \int _0^\infty \tau^{\frac{s}{2}-1 }  (\tau +y)^{-1}   d\tau = -s y^{\frac{s}{2} }.\end{aligned}
 \end{equation*}

\end{proof}

\begin{lemma}
  \label{lem:resolvent} Given Hypothesis (H)
   there is a fixed $C= C(\| V \|  _{L^{1,1}})$ such that for any $f\in {\mathcal S}({\mathbf R} )$
  and at any $x\in {\mathbf R}$ we have

\begin{equation} \label{eq:resolvent1}\begin{aligned} &
\left  | \left [(\tau - \triangle _V) ^{-1}  f\right ] (x) \right |
\le C \langle \tau \rangle ^{-\frac{1}{2}} \int _{{\mathbf R}}e^{-\sqrt{\tau}|x-y|} \langle y \rangle
\left  |   f (y) \right | dy.
\end{aligned}\end{equation}
\end{lemma}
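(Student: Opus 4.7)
The plan is to obtain \eqref{eq:resolvent1} by writing $(\tau - \triangle_V)^{-1}$ as an integral operator whose kernel is built from the Jost functions $f_\pm$ at \emph{imaginary} spectral parameter. Setting $k=\sqrt{\tau}>0$, we have $(\tau - \triangle _V) ^{-1} = ((-\triangle_V) - (-\tau))^{-1}$, so this is the resolvent of $-\triangle_V$ analytically continued from \eqref{eq:absprin2} to $\lambda = -\tau$. Since $\pm\sqrt{-\tau}= \pm{\bf i}k$ lies in the closed upper half plane only for the $+$ choice, the kernel reduces to the single branch
\begin{equation*}
G(x,y;k)= \frac{1}{w({\bf i}k)} \times \begin{cases} f_-(x,{\bf i}k)\,f_+(y,{\bf i}k), & x<y,\\ f_+(x,{\bf i}k)\,f_-(y,{\bf i}k), & x>y.\end{cases}
\end{equation*}
Writing $f_\pm(x,{\bf i}k)= e^{\mp k x}m_\pm(x,{\bf i}k)$, in either case the exponential factors combine to $e^{-k|x-y|}$, which will produce the $e^{-\sqrt{\tau}|x-y|}$ in the claim.

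Next I would bound $m_\pm(\cdot,{\bf i}k)$. The Volterra integral equations for $m_\pm$ make sense for $\tau$ in the closed upper half plane, and the fixed point argument behind Lemma \ref{lem:Jost} depends only on $|e^{{\bf i}\tau(x-y)}|\le 1$ in the relevant range of integration, which holds for $\tau={\bf i}k$ with $k\ge 0$. Hence \eqref{eq:kernel2} yields $|m_+(x,{\bf i}k)|\le C\langle x^-\rangle$ and $|m_-(x,{\bf i}k)|\le C\langle x^+\rangle$ with $C=C(\|V\|_{L^{1,1}})$. A short case analysis on the signs of $x$ and $y$ then shows that, in the region $x<y$,
\begin{equation*}
|m_-(x,{\bf i}k)\,m_+(y,{\bf i}k)| \le C\langle y\rangle,
\end{equation*}
because the factor that can possibly be large is $m_-(x,\cdot)$ when $x>0$, in which case $\langle x\rangle\le\langle y\rangle$ automatically; the symmetric statement holds in the region $x>y$. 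Combining these pieces gives the pointwise bound on the kernel $|G(x,y;k)|\le C\langle y\rangle e^{-k|x-y|}/|w({\bf i}k)|$.

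The remaining step, which I expect to be the main obstacle, is the uniform lower bound
\begin{equation*}
|w({\bf i}k)| \gtrsim \langle k\rangle \sim \langle \tau\rangle^{1/2}, \qquad k=\sqrt{\tau}>0,
\end{equation*}
with a constant depending only on $\|V\|_{L^{1,1}}$. For large $k$ this follows from $T({\bf i}k)\to 1$ together with the identity $T(\tau)=2{\bf i}\tau/w(\tau)$ quoted after \eqref{eq:wronskian}, giving $w({\bf i}k)\sim -2k$. For small $k$ the generic assumption $T(0)=0$ in Hypothesis (H) means $w(0)\ne 0$, since otherwise $T$ would have a pole at $0$; so $|w({\bf i}k)|$ stays bounded away from $0$ near $k=0$. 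In the intermediate range, $w({\bf i}k)\ne 0$ because $w({\bf i}k)=0$ would produce a square-integrable solution of $-\triangle_V u = -k^2 u$, contradicting $\sigma(\triangle_V)=(-\infty,0]$. Continuity of $w(\cdot)$ on $[0,\infty)$, plus the asymptotics, then yields the required uniform lower bound.

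Putting the three ingredients together gives $|G(x,y;k)|\le C\langle\tau\rangle^{-1/2}\langle y\rangle e^{-\sqrt{\tau}|x-y|}$, and integrating against $|f(y)|$ produces \eqref{eq:resolvent1}.
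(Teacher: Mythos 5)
Your proposal is correct and follows essentially the same route as the paper: write $(\tau-\triangle_V)^{-1}$ as the Green's function built from the Jost solutions at imaginary spectral parameter, bound $|m_\pm|$ via \eqref{eq:kernel2} with the same sign case analysis yielding the factor $\langle y\rangle$, and bound $|w|^{-1}\lesssim\langle\tau\rangle^{-1/2}$ using the large-$\tau$ asymptotics, absence of eigenvalues on the negative axis, and the fact that $T(0)=0$ forces $w(0)\neq 0$. You are merely more explicit than the paper about the continuation to the imaginary axis and about the Wronskian lower bound, which the paper simply asserts.
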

\begin{proof} Consider the Wronskian $w(\sqrt{\tau}) $ defined in \eqref{eq:wronskian} . Recall that, since $V\in {\mathcal S}({\mathbf R})$, we have
$w(\sqrt{\tau})>0$ for $ {\tau}>0$ and
$w(\sqrt{\tau})\sim \sqrt{\tau}$ as  $ {\tau}\to +\infty$. The hypothesis
that $T(0)=0$ implies that  $w(0)>0$.

\noindent We have \begin{equation*}  \begin{aligned} &
 \left [(\tau - \triangle _V) ^{-1}  f\right ] (x)
 =\int _{-\infty}^x \frac{m_{
+ } (x,\sqrt{\tau} )m_{
- } (y,\sqrt{\tau} )}{w(\sqrt{\tau})}e^{-\sqrt{\tau}|x-y|}
     f (y)   dy \\& +\int ^{+\infty}_x \frac{m_{
+ } (y,\sqrt{\tau} )m_{
- } (x,\sqrt{\tau} )}{w(\sqrt{\tau})}e^{-\sqrt{\tau}|x-y|}
    f (y)   dy .
\end{aligned}\end{equation*}
We will use $0<w^{-1}(\sqrt{\tau})<C_1\langle \tau \rangle ^{-\frac{1}{2}} $
for a fixed  $C_1= C(\| V \|  _{L^{1,1}})$. Inequality \eqref{eq:resolvent1} follows in elementary
fashion by   the following inequalities, where $C_2= C(\| V \|  _{L^{1,1}})$  is
a fixed sufficiently large number:

\begin{itemize}
\item  for $x\ge 0$ we have $|m_{
+ } (x,\sqrt{\tau} )m_{
- } (y,\sqrt{\tau} )|\le C_2\langle y \rangle $;

\item  for $x\ge 0$ we have $|m_{
- } (x,\sqrt{\tau} )m_{
+ } (y,\sqrt{\tau} )| \chi _{{\mathbf R}^+} (y-x)\le C_2\langle x \rangle \le C_2\langle y \rangle $;

\item  for $x< 0$ we have $|m_{
+ } (x,\sqrt{\tau} )m_{
- } (y,\sqrt{\tau} )| \chi _{{\mathbf R}^+} (x-y)\le C_2\langle x \rangle \le C_2\langle y \rangle $;

\item  for $x< 0$ we have $|m_{
- } (x,\sqrt{\tau} )m_{
+ } (y,\sqrt{\tau} )|   \le C_2\langle y \rangle $.

\end{itemize}

\end{proof}

\begin{lemma}
\label{lem:resolvent1} Under Hypothesis (H) there is a fixed $C $ such that

\begin{equation} \label{eq:resolvent11}\begin{aligned} &
\|  (\tau - \triangle _V) ^{-1}   V _1    (\tau - \triangle _V) ^{-1} f \| _{L^1_x}
\\&\le C   \left ( \| V \|  _{L^{1,2} } + \| \frac  d{dx}V \|  _{L^{1,3} } \right ) \tau   ^{-1} \langle \tau \rangle ^{-1} \| f \| _{L^\infty_x}.
\end{aligned}\end{equation}
\end{lemma}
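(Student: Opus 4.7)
The plan is to control $T := (\tau - \triangle_V)^{-1} V_1 (\tau - \triangle_V)^{-1}$ as an integral operator from $L^\infty_x$ to $L^1_x$, using the Schur-type bound $\|T\|_{L^\infty_x\to L^1_x}\le\iint|K_T(x,z)|\,dx\,dz$, where $K_T(x,z) = \int K_V(\tau,x,y)\,V_1(y)\,K_V(\tau,y,z)\,dy$ is the integral kernel of $T$. By Fubini,
\begin{equation*}
\iint|K_T(x,z)|\,dx\,dz\le \int|V_1(y)|\left(\int|K_V(\tau,x,y)|\,dx\right)\left(\int|K_V(\tau,y,z)|\,dz\right)dy.
\end{equation*}

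The main input is the pointwise kernel estimate
$|K_V(\tau,x,y)|\le C\langle\tau\rangle^{-1/2}\langle y\rangle e^{-\sqrt\tau|x-y|}$
with $C=C(\|V\|_{L^{1,1}})$, which is implicit in the region-by-region bounds on $m_+(x_{>})m_-(x_{<})$ and $1/|w(i\sqrt\tau)|\le C\langle\tau\rangle^{-1/2}$ used in the proof of Lemma \ref{lem:resolvent}. Combined with the symmetry $K_V(\tau,x,y)=K_V(\tau,y,x)$, which holds because $V$ is real and $\triangle_V$ self-adjoint, one also has the same bound with $\langle x\rangle$ in place of $\langle y\rangle$. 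Choosing the form with the weight on the \emph{fixed} (non-integrated) variable gives
\begin{equation*}
\int|K_V(\tau,x,y)|\,dx\le C\langle\tau\rangle^{-1/2}\langle y\rangle\int e^{-\sqrt\tau|x-y|}\,dx = \frac{2C\langle y\rangle}{\sqrt\tau\,\langle\tau\rangle^{1/2}},
\end{equation*}
and analogously for the $dz$ integral. I expect this choice to be the main subtlety: if the weight is placed on the integration variable instead, one bounds $\int \langle x\rangle e^{-\sqrt\tau|x-y|}\,dx\le \langle y\rangle\cdot 2/\sqrt\tau + 2/\tau$, and the $2/\tau$ term injects a spurious $\tau^{-3/2}$ contribution at small $\tau$, which would be incompatible with the required $\tau^{-1}\langle\tau\rangle^{-1}$ decay and with the range $0<s<2$ needed in Lemma \ref{lem:boundA}.

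Substituting the inner estimates yields
$\|T\|_{L^\infty_x\to L^1_x}\le 4C^2\,\tau^{-1}\langle\tau\rangle^{-1}\int|V_1(y)|\langle y\rangle^2\,dy$.
Since $V_1=2V+xV'$, the triangle inequality together with $|y|\le\langle y\rangle$ gives
\begin{equation*}
\int|V_1(y)|\langle y\rangle^2\,dy\le 2\|V\|_{L^{1,2}}+\int|V'(y)|\,|y|\,\langle y\rangle^2\,dy\le 2\|V\|_{L^{1,2}}+\|V'\|_{L^{1,3}},
\end{equation*}
which combined with the previous estimate produces \eqref{eq:resolvent11}.
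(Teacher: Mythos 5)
Your proof is correct and follows essentially the same route as the paper: both rest on the weighted kernel bound of Lemma \ref{lem:resolvent} (giving $\tau^{-1/2}\langle\tau\rangle^{-1/2}$ per resolvent), use the symmetry of the Green's function (the paper phrases this as duality, $L^1\to L^1$ versus $L^\infty\to L^\infty$) to put the weight on the middle variable, and charge the resulting $\langle y\rangle^{2}$ to $V_1$ to produce $\|V\|_{L^{1,2}}+\|V'\|_{L^{1,3}}$. The only difference is presentational: you run a Schur-type bound on the composed kernel, whereas the paper factorizes $V_1=\langle x\rangle^{-1}V_2\langle x\rangle^{-1}$ and composes weighted operator norms, which amounts to the same estimates.
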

\begin{proof}
We can factorize $V_1= \langle x \rangle ^{-2}V_2$ with  
$  V_2 \in L^1 ({\mathbf R} ) $.
\noindent We have \begin{equation*}  \begin{aligned} &
 \|  (\tau - \triangle _V) ^{-1}   V _1    (\tau - \triangle _V) ^{-1} f \| _{L^1_x} \\&
\le \|  (\tau - \triangle _V) ^{-1}   \langle x \rangle ^{-1}  \| _{L^1_x\to L^1_x}    \|  V _2  \| _{L^1_x } \|   \langle x \rangle ^{-1} (\tau - \triangle _V) ^{-1}    \| _{L^\infty_x \to L^\infty_x}  .
\end{aligned}\end{equation*}
We have
\begin{equation*}  \begin{aligned} &
   \|  (\tau - \triangle _V) ^{-1}    \langle  \cdot  \rangle ^{-1} f \| _{L^1_x }   \le
C    \langle \tau \rangle ^{-\frac{1}{2}}  \| e^{-\sqrt{\tau}|\cdot |}* ( \langle \cdot  \rangle
  \left  | \langle  \cdot  \rangle ^{-1} f (\cdot ) \right | ) \| _{L^1_x }\\&  \le C'
\tau   ^{-\frac{1}{2}}\langle \tau \rangle ^{-\frac{1}{2}}    \|f\| _{L^1_x} .
\end{aligned}\end{equation*}
 The following bound with the same $C'$ follows by duality:
 \begin{equation*}  \begin{aligned} &
   \|   \langle x \rangle ^{-1}  (\tau - \triangle _V) ^{-1} f \| _{L^\infty_x }   \le   C'  \tau   ^{-\frac{1}{2}}\langle \tau \rangle ^{-\frac{1}{2}}     \|f\| _{L^\infty_x } .
\end{aligned}\end{equation*} 
Finally, by $V_2=\langle x \rangle ^{ 2}  (2V +x V' ) $ it follows that $\|  V _2  \| _{L^1  }\lesssim   \| V \|  _{L^{1,2} } + \| \frac  d{dx}V \|  _{L^{1,3} } .$

This yields inequality \eqref{eq:resolvent11}.
\end{proof}

\bigskip \textit{Proof of Lemma \ref{lem:boundA}}. The inequality  $\| A(s)f\| _{L^1_x} \le C  \| f\| _{L^\infty_x}    $ for fixed $C>0$ follows
by Lemmas \ref{lem:kato} and \ref{lem:resolvent1} which justify the following inequalities:

\begin{equation*}  \begin{aligned} &
   \| A(s)f\| _{L^1_x} \le c(s) \int _0^\infty
\tau^{\frac{s}{2}}  \| (\tau - \triangle _V) ^{-1}V_1 (\tau - \triangle _V) ^{-1}f\| _{L^1_x}
d\tau  \\& \le C ' \| f\| _{L^\infty_x} \int _0^\infty
\tau^{\frac{s}{2}-1} \langle \tau \rangle ^{-1}  d\tau \le C    \| f\| _{L^\infty_x}
\end{aligned}\end{equation*}
where the integral converges if $0<s<2$   and where $C = C(s,\| V \|  _{L^{1,2}}, \| V '\|  _{L^{1,3}} )$. \qed

Department of Mathematics and Geosciences,  University of Trieste, via Valerio  12/1  Trieste, 34127  Italy

{\it E-mail Address}: {\tt scuccagna@units.it}

\vskip10pt

Department of Mathematics,  University of Pisa,    Largo Bruno Pontecorvo 5  Pisa, 56127  Italy.

{\it E-mail Address}: {\tt georgiev@dm.unipi.it}

{\it E-mail Address}: {\tt viscigli@dm.unipi.it}

\end{document}